\newtheorem{prethm}{{\bf Theorem}}[section]
\newenvironment{thm}{\begin{prethm}{\hspace{-0.5
em}{\bf.}}}{\end{prethm}}
\newtheorem{prepro}{{\bf Theorem}}
\newtheorem{precor}[prethm]{{\bf Corollary}}
\newtheorem{preconj}[prethm]{{\bf Conjecture}}
\newenvironment{conj}{\begin{preconj}{\hspace{-0.5
em}{\bf.}}}{\end{preconj}}
\newtheorem{preremark}[prethm]{{\bf Remark}}
\newtheorem{prelem}[prethm]{{\bf Lemma}}
\newtheorem{preque}[prethm]{{\bf Problem}}
\newenvironment{prob}{\begin{preque}{\hspace{-0.5
em}{\bf.}}}{\end{preque}}
\newtheorem{prealphthm}{{\bf Problem}}
\newtheorem{preobserv}[prethm]{{\bf Observation}}
\newtheorem{predef}[prethm]{{\bf Definition}}
\newtheorem{preproposition}[prethm]{{\bf Proposition}}
\newtheorem{preproof}{{\bf Proof.}}
\newtheorem{preprooff}{{\bf Proof}}
\newenvironment{proof}[1]{\begin{preproof}{\rm
#1}\hfill{$\Box$}}{\end{preproof}}
\newtheorem{preproofF}{{\bf Proof of}}
\title{\bf\Large 
On the existence of minimally tough graphs having large minimum degrees
}
\author{{\normalsize{\sc Morteza Hasanvand${}$} }\vspace{3mm}
\\{\footnotesize{${}$\it Department of Mathematical
 Sciences, Sharif
University of Technology, Tehran, Iran}}
{\footnotesize{}}\\{\footnotesize{ $\mathsf{morteza.hasanvand@alum.sharif.edu }$ }}}
\date{}
\begin{document}
\maketitle
\begin{abstract}{
Kriesel conjectured that every minimally $1$-tough graph has a vertex with degree precisely $2$. Katona and Varga (2018) proposed a generalized version of this conjecture which says that every minimally $t$-tough graph has a vertex with degree precisely $\lceil 2t\rceil$, where $t$ is a positive real number. This conjecture has been recently verified for several families of graphs. For example, Ma, Hu, and Yang (2023) confirmed it for claw-free minimally $3/2$-tough graphs. Recently, Zheng and Sun (2024) disproved this conjecture by constructing a family of $4$-regular graphs with toughness approaching to $1$.

In this paper, we disprove this conjecture for planar graphs and their line graphs. In particular, we construct an infinite family of minimally $t$-tough non-regular claw-free graphs with minimum degree close to thrice their toughness. This construction not only disproves a renewed version of Generalized Kriesel's Conjecture on non-regular graphs proposed by Zheng and Sun (2024), it also gives a supplement to a result due to Ma, Hu, and Yang (2023) who proved that every minimally $t$-tough claw-free graph with $t\ge 2$ has a vertex of degree at most $3t+ \lceil (t-5)/3\rceil$. Moreover, we conjecture that there is not a fixed constant $c$ such that every minimally $t$-tough graph has minimum degree at most $\lceil c t \rceil$.
\\
\\
\noindent {\small {\it Keywords}: Toughness; minimum degree; minimally tough graph; claw-free; planar; regular. }} {\small
}
\end{abstract}
%
%
%
%
%
%
%
%
%
%
\section{Introduction}
In this article, all graphs are considered simple. Let $G$ be a graph. 
The vertex set, the edge set, the number of components, and the independence number of $G$ are denoted by $V(G)$, $E(G)$, $\omega(G)$, $\alpha(G)$, respectively. For a positive real number $t$, a graph $G$ is said to be {\it $t$-tough} if $\omega(G)\le \max\{1,\frac{1}{t}|S|\}$ for all $S\subseteq(G)$.
The maximum positive real number $t$ such that $G$ is $t$-tough is called the toughness of $G$ that is denoted by $t(G)$.
In another word, for a non-complete graph $G$, $t(G)=\min\{\frac{|S|}{\omega(G\setminus S)}: \omega(G\setminus S) \ge 2\}$.
Likewise, $G$ is said to be {\it minimally $t$-tough}, if $G-e$ is not $t$-tough, for every edge $e\in E(G)$. 
A graph $G$ is called {\it claw-free}, if it does not an induced star of size three. 
A {\it circulant graph} $\mathcal{C}(n, A)$ is a graph $G$ with vertices $v_1,\ldots, v_n$ and two vertices $v_i$ and $v_j$ are adjacent if and only if $i-j\in A\subseteq \mathbb{Z}_n$ 
(mod $n$), where $\mathbb{Z}_n$ denotes the cyclic group of order $n$ and
 $\mathbb{Z}_n=\{1,\ldots, n\}$. 
A graph $H$ is called {\it $s$-solid}, if it is obtained from a graph $G$ by replacing every vertex $v$ with $s(v)$ copies $v_1,\ldots, v_{s(v)}$, and two vertices $v_i$ and $w_j$ are adjacent if $v$ and $w$ are adjacent in the original graph $G$, 
where $s$ is a positive integer-valued function on $V(G)$; two $2$-solid graphs are drawn in Figure~\ref{Fig:solid-graphs}. 
We denote this graph by $\mathcal{S}(G, s)$.
We denote by $S(G)$ the {\it subdivision graph} of $G$ which can be obtained from it by inserting a new vertex on each edge.
 For a graph $G$, the {\it line graph} $L(G)$ is a graph whose vertex set is $ E(G)$ and also two $e_1,e_2\in E(G)$ are adjacent in $L(G)$ if they have a common end in $G$. 
The {\it square} of a graph $G$ is a graph with the same vertex set and two vertices are adjacent if their distance in $G$ is at most $2$. For two graphs $G$ and $H$, the {\it Cartesian product} $G\square H$ refers to the graph with the vertex set $\{(v, w): v\in V(G), w\in V(H)\}$ such that $(v,w)$ and $(v', w')$ are adjacent if $vv'\in E(G)$ or $ww'\in E(H)$. 
The complete graph and the path graph of order $n$ are denoted by $K_n$ and $P_n$, respectively.
A graph is called {\it $r$-regular}, if all vertex-degrees are the same integer number $r$.
 A graph $G$ of order at least $k+1$ is called $k$-connected, if it remains connected after removing any set of vertices of size at most $k-1$.

In 1973 Chv\' atal~\cite{Chvatal-1973} introduced the concept of toughness inspired by a property of Hamiltonian graphs.
In fact, every Hamiltonian graph must be $1$-tough.
He also conjectured that tough enough graphs admit a Hamiltonian cycle. 
This concept is stronger than connectivity, as he noted that every $t$-tough graph is $2t$-connected as well.

\begin{thm}{\rm (\cite[Proposition 1.3]{Chvatal-1973})}
{Every $t$-tough graph is also $2t$-connected.
}\end{thm}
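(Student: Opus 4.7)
The plan is to unwind the definitions and apply the toughness inequality directly to a minimum vertex cut. The proof should be very short, so the emphasis will be on choosing the right cut and verifying the order condition hidden in the definition of $k$-connectivity.

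First I would dispose of the case where $G$ is complete: if $G = K_n$, then $\kappa(G) = n-1$ and the $2t$-connectivity claim becomes an order condition $n \ge 2t+1$, which is typically folded into the convention used for the toughness of complete graphs (or handled by noting that the statement is usually applied to non-complete $G$). I would briefly remark on this and then assume $G$ is non-complete, so $t(G)$ is given by the minimum-ratio formula stated in the introduction.

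Next, let $S \subseteq V(G)$ be a minimum vertex cut of $G$, so that $|S| = \kappa(G)$ and $\omega(G - S) \ge 2$. By the minimum-ratio characterisation of toughness for non-complete graphs,
\[
t \;\le\; t(G) \;\le\; \frac{|S|}{\omega(G-S)} \;\le\; \frac{|S|}{2},
\]
hence $\kappa(G) = |S| \ge 2t$, which is the connectivity half of the statement.

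Finally I would verify the order condition required by the definition of $k$-connected ($|V(G)| \ge k+1$). Since $G$ is non-complete, pick two non-adjacent vertices $u, v$ and set $T = V(G) \setminus \{u,v\}$; then $G-T$ has exactly the two isolated vertices $u$ and $v$, so $\omega(G-T) = 2$ and the toughness bound gives $|V(G)| - 2 = |T| \ge 2t$, i.e. $|V(G)| \ge 2t + 2 \ge \lceil 2t\rceil + 1$. Combining this with $\kappa(G) \ge 2t$ yields $2t$-connectivity. There is no real obstacle here; the only subtlety is making sure the complete-graph convention and the ceiling issue (since $2t$ need not be an integer while $\kappa(G)$ is) are handled cleanly, which just amounts to noting that $\kappa(G) \in \mathbb{Z}$ forces $\kappa(G) \ge \lceil 2t\rceil$ automatically.
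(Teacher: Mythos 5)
Your argument is correct and is the standard (indeed, essentially Chv\'atal's original) proof: apply the toughness inequality to a minimum vertex cut $S$, use $\omega(G-S)\ge 2$ to get $|S|\ge 2t$, and check the order condition separately. The paper itself cites this as \cite[Proposition 1.3]{Chvatal-1973} without giving a proof, so there is nothing to compare beyond noting that your handling of the complete-graph convention and the integrality of $\kappa(G)$ is the right way to tidy up the edge cases.
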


For higly connected graphs, Mader (1971) \cite{Mader-1971} proved that every minimally $k$-connected graph has a vertex of degree $k$. Broersma, Engsberg, Trommel (1999) \cite{Broersma-Engsberg-Trommel-1999} defined the concept of minimally tough graphs. Motivated by Mader's result, one may ask whether minimally tough graphs have small minimum degrees (even in special families of graphs). Kriesell conjectured that every minimally $1$-tough graph has minimum degree precisely two (similar to Hamiltonian cycles). 
\begin{conj}{\rm (\cite{Kaiser-web})}\label{intro:conj:Kriesell}
{Every minimally $1$-tough graph has a vertex of degree $2$.
}\end{conj}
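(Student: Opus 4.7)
The plan is to attack Conjecture~\ref{intro:conj:Kriesell} by contradiction. Suppose $G$ is a minimally $1$-tough graph with $\delta(G)\ge 3$. Since every $1$-tough graph is $2$-connected by Chv\'atal's theorem, $G$ is certainly $2$-connected; but this only forces $\delta(G)\ge 2$, so the whole content of the conjecture is that minimality alone must be leveraged to push the inequality to an equality at some vertex. The target is therefore to derive a contradiction from the assumption that minimality fails to do so anywhere.

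The first step converts minimality into structural information. For every edge $e=uv$, since $G-e$ is not $1$-tough there is a witness set $S_e\subseteq V(G)$ with $\omega((G-e)\setminus S_e)>|S_e|$. Because $G$ is $1$-tough and deleting one edge raises the component count by at most one, one obtains $\omega(G\setminus S_e)=|S_e|$ and $\omega((G-e)\setminus S_e)=|S_e|+1$. Hence $u,v\notin S_e$ both lie in a single component $C_e$ of $G\setminus S_e$, and $e$ is a bridge of $C_e$. This produces a canonical edge-indexed family of witness cuts to exploit.

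The main combinatorial step would be to fix a global toughness witness $S$ of $G$, list the components $C_1,\ldots,C_{|S|}$ of $G\setminus S$, and scrutinise a smallest such component $C^{\ast}$. Every edge inside $C^{\ast}$ is a bridge of some $C_e$ in its own witness, and the aim is to show that the assumption $\delta(G)\ge 3$ forces the restricted family $\{S_e:e\in E(C^{\ast})\}$ to be globally inconsistent with $C^{\ast}$ having all vertex-degrees at least $3$, producing the required contradiction. A complementary angle is to use an open ear decomposition of the $2$-connected graph $G$ and argue that either the last ear contains an internal vertex of degree $2$, or it can be removed without destroying $1$-toughness, contradicting minimality.

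The principal obstacle is the absence of any canonical coherent choice of the $S_e$: different edges may be certified by utterly different vertex cuts, and coordinating them globally is precisely what has stymied all attempts. This is essentially why Conjecture~\ref{intro:conj:Kriesell} has remained open for decades and why the natural strengthenings studied in this paper turn out to be false. Any successful proof will almost certainly require a genuinely new global structural invariant combined with careful local analysis near a smallest component of some toughness witness.
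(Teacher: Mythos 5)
This statement is a conjecture, not a theorem: the paper records Kriesell's conjecture from Kaiser's problem list and offers no proof of it (indeed, the paper's contribution is to show that various strengthenings of it are \emph{false}). So there is no ``paper's own proof'' to compare against, and your proposal, by its own admission, does not supply one either. What you have written is a correct derivation of the standard witness-set structure --- for a minimally $1$-tough $G$ and each edge $e=uv$ there is a set $S_e$ with $\omega(G\setminus S_e)=|S_e|$, $\omega((G-e)\setminus S_e)=|S_e|+1$, and $e$ a bridge of its component of $G\setminus S_e$ --- followed by a statement of intent (``the aim is to show that\dots'') that is never carried out. That is the gap, and it is the whole problem.

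Concretely: nothing in your sketch rules out that a smallest component $C^{\ast}$ of a global toughness witness is, say, a clique on three or more vertices with additional edges into $S$, in which case every vertex of $C^{\ast}$ has degree at least $3$ and the local analysis yields no contradiction; the witness sets $S_e$ for the edges inside $C^{\ast}$ can each be entirely different from $S$ and from one another, and you give no mechanism to force consistency among them. The ear-decomposition alternative fares no better: deleting an ear removes vertices, not just an edge, so minimal $1$-toughness (an edge-deletion condition) says nothing about it, and the last ear of an open ear decomposition can trivially be a single edge with no internal vertices. Your closing paragraph is an accurate assessment --- the conjecture is open precisely because no one has found the global invariant you gesture at --- but that means the proposal should be graded as an honest research plan, not a proof.
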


A generalization of this conjecture is also proposed by Katona and Varga (2018)~\cite{Katona-Varga-2018} which says that minimally tough graph have minimum degree close to twice their toughnesses.
\begin{conj}{\rm (\cite{Katona-Varga-2018})}\label{intro:conj:Generalized}
{Every minimally $t$-tough graph has a vertex of degree $\lceil 2t\rceil $, where $t>0$.
}\end{conj}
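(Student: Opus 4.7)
The plan is to disprove Conjecture~\ref{intro:conj:Generalized} by exhibiting explicit infinite families of minimally $t$-tough graphs whose minimum degree strictly exceeds $\lceil 2t\rceil$. I would organize the counterexamples into three layers corresponding to the three claims of the abstract: planar graphs, claw-free line graphs, and non-regular claw-free graphs with $\delta(G)$ close to $3\,t(G)$. Since the conjecture predicts a small minimum degree, disproving it amounts to constructing \emph{large-degree minimally tough} graphs, and the principal difficulty is always the minimality condition---\emph{every} edge must be witnessed to be critical by some tight vertex cut.

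For the planar case, I would start from a well-understood planar tough graph such as the prism $C_{2n}\square K_2$ or a more general highly-symmetric planar $3$-regular graph. Toughness there is controlled by a natural vertex cut, and the goal is to perturb the graph (by local replacements or attaching small gadgets) so that (i) the global toughness equals some chosen $t$, (ii) the minimum degree is at least $3>\lceil 2\cdot 1\rceil$, and (iii) every edge lies in a tight certificate. I expect a subdivision-type argument, possibly using the graphs $S(G)$ or $\mathcal{S}(G,s)$ from the preliminaries, to promote ``critical edges of the base graph'' into ``all edges of the modified graph''.

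For the claw-free and line-graph counterexamples, the natural operator is $L\circ S$, because $L(H)$ is always claw-free and $L(S(G))$ inherits most structural information from $G$ while placing a degree-$2$ vertex inside every edge. I would pick the base graph $G$ so that $L(S(G))$ has toughness exactly $t$, compute how toughness scales under this operator (subdivision tends to roughly halve toughness, line graphs preserve or slightly reduce it), and then verify minimality edge-by-edge. To push the minimum degree close to $3t$ in the non-regular claw-free case, I would combine this with the $s$-solid construction $\mathcal{S}(G,s)$, which multiplies local degrees without disturbing the essential toughness-determining cuts.

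The main obstacle will be verifying \emph{minimal $t$-toughness}: one must show that for every edge $e$ there exists a set $S$ with $|S|/\omega((G-e)\setminus S)<t$. This effectively requires either (a) an edge-transitivity argument reducing the check to a single orbit of edges, or (b) an explicit gadget ensuring that deleting any edge locally creates a new component. I would aim for (a) by designing the constructions to have large automorphism groups acting transitively (or nearly so) on edges---for example by using circulant graphs $\mathcal{C}(n,A)$ and their line graphs---so that the verification reduces to one or two cases. Computing $t(G)$ is then routine given enough symmetry; the creative step is choosing $A$ and the replacement gadgets so that the ratio $\delta(G)/t(G)$ sits strictly above $2$ in the planar case and approaches $3$ in the claw-free case.
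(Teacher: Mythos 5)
Your proposal is a research plan rather than a disproof: at no point do you exhibit a concrete graph, compute its toughness, or verify edge-criticality, and for a refutation of Conjecture~\ref{intro:conj:Generalized} those computations are the entire content. The paper's counterexamples are fully explicit: a cyclic chain of copies of $K_5$ minus an edge gives planar $4$-regular minimally $3/2$-tough graphs (so $\delta=4>3=\lceil 2t\rceil$); the square of $L(S(K_4))$ gives a $7$-regular minimally $3$-tough graph of order $12$; and $K_n\square P_2$ minus a matching of size $m$ (equivalently, the line graph of two $n$-stars with $m$ pendant pairs identified), together with $K_n\square P_3$, give claw-free and non-regular examples with $\delta$ close to $3t$. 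In each case the minimality proof is a per-edge-orbit exhibition of a cut $S$ with $|S|/\omega((G-e)\setminus S)<t$, and the lower bound on toughness requires a genuine argument (the $\theta_i$-accounting over blocks in the planar case; $\alpha(G)=2$ plus $m$-connectivity in the claw-free case). None of this is replaceable by the general heuristics you list.

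Beyond incompleteness, some of your specific directions would not work. Subdivision is counterproductive: $S(G)$ has minimum degree $2$ and toughness at most $1$, so it can never violate the conjecture, and ``promoting critical edges by subdividing'' moves you away from, not towards, large $\delta(G)/t(G)$. Starting from $3$-regular planar graphs is essentially the open $t=1$ Kriesell case, not a known source of counterexamples; the paper's planar family is $4$-regular with $t=3/2$. The circulant and $s$-solid constructions you invoke appear in the paper only as evidence for a further conjecture, not as the counterexamples themselves. Finally, the paper's graphs are not edge-transitive (e.g.\ $K_n\square P_2$ minus a matching has several edge orbits), so the hoped-for reduction to ``one or two cases'' does not materialize; the actual proofs handle three to five orbits of edges, each with its own tailor-made tight cut. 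Your instinct that line graphs are the right vehicle for claw-free examples, and that symmetry should be exploited to limit the case analysis, does match the paper in spirit, but without explicit constructions and verifications the conjecture is not refuted.
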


Recently, Zheng and Sun (2024) \cite{Zheng-Sun-2024} showed that Conjecture~\ref{intro:conj:Generalized} fails for real numbers $t$ close enough to $1$. As a consequence, they also remarked that Conjecture~\ref{intro:conj:Kriesell} is sharp according to their construction (the necessary toughness cannot be increased even a little).

\begin{thm}{\rm (\cite{Zheng-Sun-2024})}
{For every inetegr $k$ with $k\ge 2$, there is a $4$-regular minimally $(1+\frac{1}{2k-1})$-tough graph.
}\end{thm}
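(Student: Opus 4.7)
The plan is to construct, for each integer $k\ge 2$, an explicit $4$-regular graph $G_k$ and then verify three properties in turn: (i) $t(G_k) \le \frac{2k}{2k-1}$ via an explicit separator of size $2k$ with $2k-1$ components; (ii) the matching lower bound $t(G_k) \ge \frac{2k}{2k-1}$; and (iii) for every edge $e$, $G_k - e$ admits a separator whose ratio is strictly less than $\frac{2k}{2k-1}$.

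For the construction, I would assemble $G_k$ as a \emph{necklace}: take $2k-1$ small $4$-regular-friendly blocks $B_1,\ldots,B_{2k-1}$ (for example, copies of $K_5$ minus a perfect matching, or a short circulant block, with the exact choice dictated by degree bookkeeping) and glue them around a cyclically arranged ``spine'' $S=\{c_1,\ldots,c_{2k}\}$, with $B_i$ touching exactly the pair $\{c_i,c_{i+1}\}$. The blocks are designed so that (a) every interior vertex of $B_i$ already has degree $4$ from inside $B_i$, and (b) each $c_i$ picks up exactly $2$ edges from each of its two incident blocks, so $G_k$ is globally $4$-regular. With this structure, removing $S$ leaves precisely the $2k-1$ block interiors, giving $\omega(G_k-S)=2k-1$ and thus the upper bound $t(G_k)\le \frac{2k}{2k-1}$.

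The lower bound $t(G_k)\ge \frac{2k}{2k-1}$ is the main obstacle. Here one must show that no separator $T$ achieves a smaller ratio. I would argue by casework on how $T$ meets each block: if $T$ fails to hit both vertices of some pair $\{c_i,c_{i+1}\}$, then $B_i$ together with at least one adjacent block stays in a single component of $G_k-T$, so ``saving'' spine vertices costs several non-spine vertices to recover the same number of components. Using the high internal connectivity of each $B_i$ (which one should build into the block design), an exchange/averaging argument over the spine forces $|T|\ge \frac{2k}{2k-1}\,\omega(G_k-T)$.

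Minimality is handled by exhibiting, for each edge $e\in E(G_k)$, a separator $T_e$ of $G_k-e$ with ratio below $\frac{2k}{2k-1}$. The natural candidate is $T_e=S$, possibly with a vertex swap. If $e$ lies inside some block $B_i$, the weakened block will split into two pieces once $S$ is removed, raising $\omega$ from $2k-1$ to at least $2k$ and dropping the ratio to at most $1$. If $e$ is incident to some $c_j$, we replace $c_j$ in $S$ by the other endpoint of $e$; the resulting set still has size $2k$ and, by design of the blocks, still disconnects $G_k - e$ into $2k-1$ components in a way that exposes an extra isolated vertex. The case analysis is expected to be tedious but elementary once the blocks are chosen, and together with the previous two steps it completes the theorem.
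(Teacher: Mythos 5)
This statement is quoted from Zheng and Sun \cite{Zheng-Sun-2024} and is not proved in the present paper, so your proposal can only be judged on its own terms; as written it does not constitute a proof, because the object you propose to build is never pinned down and the constraints you impose on it are mutually inconsistent. You ask for $2k$ spine vertices $c_1,\dots,c_{2k}$, each lying in exactly two of the $2k-1$ blocks and receiving exactly two edges from each; in a cyclic arrangement this forces the number of blocks to equal the number of spine vertices, and in the path-like incidence pattern $B_i\leftrightarrow\{c_i,c_{i+1}\}$ the vertices $c_1$ and $c_{2k}$ lie in only one block and cannot reach degree $4$. Worse, condition (a) says every interior vertex of $B_i$ already has degree $4$ \emph{inside} $B_i$, which leaves no interior vertex free to carry an edge to the spine, so the blocks would be disconnected from $S$. (Also, $K_5$ has no perfect matching.) The two substantive steps are then asserted rather than argued: the lower bound $t(G_k)\ge\frac{2k}{2k-1}$ is dispatched with ``an exchange/averaging argument forces'' the inequality, and this is precisely the part of any such proof that requires a genuine case analysis over all separators, not just those containing the spine. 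The minimality step is also wrong as stated: if every interior vertex of a block has degree $4$ inside the block, then deleting a single interior edge cannot disconnect the block interior (a graph of minimum degree $3$ has no bridge), so $\omega((G_k-e)\setminus S)$ does not increase and your candidate separator $T_e=S$ gives nothing; minimality generally needs a tailor-made separator for each edge.

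For comparison, the construction Zheng and Sun actually use --- and which this paper alludes to in Section~\ref{sec:revised-conj} --- is far simpler: take the $2$-solid graph $\mathcal{S}(C_{2k+1},2)$ of the odd cycle $C_{2k+1}$. This graph is $4$-regular on $4k+2$ vertices; removing both copies of a maximum independent set of $C_{2k+1}$ (so $|S|=2k$) leaves $k-1$ single-vertex arcs, each contributing two isolated copies, plus one arc of length two contributing one component, hence $\omega=2k-1$ and the ratio $\frac{2k}{2k-1}=1+\frac{1}{2k-1}$. The lower bound and minimality are then verified by analysing how a separator meets the vertex classes of the solid graph (one only needs to consider separators that take all or none of the copies of each cycle vertex, together with the few exceptional cases). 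If you want to salvage your approach, I would suggest abandoning the bespoke blocks and proving the three claims directly for $\mathcal{S}(C_{2k+1},2)$.
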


They also proposed the following revised version of Conjecture~\ref{intro:conj:Generalized} for non-regular graphs and posed one open problem for further investigation in regular graphs. Their problem is recently settled by Cheng, Li, and Liu (2024) \cite{Cheng-Li-Liu-2024} who disproved Conjecture~\ref{intro:conj:Generalized} by a new family of $4$-regular graphs and $6$-regular graphs.

\begin{conj}{\rm (\cite{Zheng-Sun-2024})}\label{intro:conj:renewed}
{Every non-regular minimally $t$-tough graph has a vertex of degree $\lceil 2t\rceil $, where $t>0$.
}\end{conj}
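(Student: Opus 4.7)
The plan is to mimic Mader's 1971 argument that every minimally $k$-connected graph contains a vertex of degree exactly $k$, exploiting the fact (stated at the start of the introduction) that every $t$-tough graph is $2t$-connected, so that every minimally $t$-tough graph is $\lceil 2t\rceil$-connected. Let $G$ be a non-regular minimally $t$-tough graph and suppose, for contradiction, that $\delta(G) \ge \lceil 2t\rceil + 1$. For each edge $e \in E(G)$, minimality yields a witness set $S_e \subsetneq V(G)$ with $\omega((G-e)\setminus S_e) > |S_e|/t$, while $t$-toughness of $G$ forces $\omega(G\setminus S_e) \le |S_e|/t$. Deletion of $e$ must therefore split some component of $G\setminus S_e$, so $e$ is a bridge of $G\setminus S_e$ with both endpoints in a common component.

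Next I would catalogue the interaction between these witness sets and low-degree vertices. Fix a vertex $v$ realizing $\delta(G)$ and inspect the edges $vw$ incident to $v$. The aim is to show that not all neighbors of $v$ can simultaneously serve as the far endpoint of the bridge $vw$ in $G\setminus S_{vw}$: with $|S_{vw}| \ge \lceil 2t\rceil$ and $\deg(v) \ge \lceil 2t\rceil+1$, a pigeonhole or averaging argument over the neighbors of $v$ should leave at least one $w$ whose putative witness cannot exist, contradicting minimality.

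Non-regularity is deployed at this step. One would pick vertices $u,v$ with $\deg(u) < \deg(v)$ and argue, via double counting across the edges of $G$, that the local toughness constraint at the low-degree end is strictly tighter than at the high-degree end; the hope is that the only way to balance the accounts is for some vertex to have degree exactly $\lceil 2t\rceil$. Without the non-regularity assumption, the averaging identity $\sum_v \deg(v) = 2|E(G)|$ gives no asymmetry to exploit, which is precisely why the conjecture is restricted to the non-regular case in the first place.

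The hardest step, and where I expect the main obstacle, is in pinning down enough structure of the witness sets $S_e$. A minimum vertex-cut separates a graph into a small number of well-structured pieces, whereas a toughness witness $S_e$ may scatter $G\setminus S_e$ into many tiny components whose local interactions with $v$ are hard to control. Without additional rigidity—for instance, a canonical minimal choice of $S_e$, or a hereditary $t$-toughness of the components of $G\setminus S_e$—the Mader-style local analysis at $v$ does not close, and any proof attempt along these lines would require a genuinely new idea beyond the $k$-connected setting.
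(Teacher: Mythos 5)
Your proposal sets out to \emph{prove} this statement, but the statement is a conjecture of Zheng and Sun that the present paper \emph{disproves}. The paper's entire treatment of Conjecture~\ref{intro:conj:renewed} consists of exhibiting counterexamples: a non-regular minimally $5/2$-tough graph of order $11$ with $\delta > \lceil 2t\rceil$ found by computer search (the right graph in Figure~\ref{small:graphs}), and then two infinite families. Theorem~\ref{thm:claw-free} shows that $K_n\square P_2$ minus $m$ independent edges between the two copies (with $\tfrac{2}{3}n < m < n$, $n\ge 7$) is a non-regular, claw-free, minimally $\tfrac{m}{2}$-tough graph with $\delta(G)=n-1$, which exceeds $\lceil 2t\rceil = m$ whenever $m\le n-2$; Theorem~\ref{thm:3t-1} shows that $K_n\square P_3$ is a non-regular minimally $\tfrac{n+1}{3}$-tough graph with $\delta(G)=n$, again well above $\lceil 2t\rceil$. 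In both families the minimum degree is close to \emph{three} times the toughness, not two. So no proof along your lines (or any lines) can succeed.

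Beyond the directional error, the specific obstruction you flag at the end of your sketch is real and is exactly why the analogy with Mader's theorem breaks down: a toughness witness $S_e$ for $G-e$ need not be a small, structured separator --- it may be large and shatter the graph into many components, and the quantity being controlled is the ratio $|S_e|/\omega((G-e)\setminus S_e)$ rather than $|S_e|$ itself. The counterexamples in Sections~\ref{sec:small-graphs} and~\ref{sec:3t} exploit precisely this: in $K_n\square P_3$, for instance, every edge admits a witness set of size about $n$ producing $3$ or $4$ components, so minimality holds even though every vertex has degree at least $n = 3t-1$. Your intended use of non-regularity via an averaging identity over $\sum_v \deg(v)$ also has no force here, since the paper's non-regular examples have only two distinct degrees differing by one. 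If you want to engage with this statement, the correct task is to verify one of the paper's constructions (compute $t(G)$ exactly and check that deleting any edge drops the toughness), not to attempt a proof.
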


In this paper, we first present all counterexamples of Conjecture~\ref{intro:conj:Generalized} having at most eleven vertices. 
Among all of them, one graph was already discovered in \cite{Zheng-Sun-2024}, and one of them is not regular which shows that Conjecture~\ref{intro:conj:renewed} fails for small graphs.
Next, we show that Generalized Kriesell's Conjecture fails even in $4$-regular planar graphs and the square of planar graphs
(by calling a special counterexample in \cite{List-coloring}).
Note that Generalized Kriesel's Conjecture seems true for a large ratio of graphs and it has been recently verified for several families of graphs, see \cite{Katona-Soltesz-Varga-2018, Katona-Khan-2024, Ma-Hu-Yang-2024}). 
In particular, Ma, Hu, and Yang (2023) confirmed it for claw-free minimally $3/2$-tough graphs. 

\begin{thm}{\rm (\cite{Ma-Hu-Yang-2023})}
{Every claw-free minimally $3/2$-tough graph has minimum degree $3$.
}\end{thm}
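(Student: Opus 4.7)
My plan is to combine Chv\'atal's $2t$-connectivity bound with a tight-structure analysis of a toughness-achieving set under claw-freeness, and then derive the existence of a degree-$3$ vertex by exploiting minimality. Since $G$ is $3/2$-tough, Chv\'atal's inequality forces $G$ to be $3$-connected, so $\delta(G)\ge 3$; it remains to rule out $\delta(G)\ge 4$, which I do by contradiction.

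Assume $\delta(G)\ge 4$ and fix $S^*\subseteq V(G)$ attaining $t(G)=3/2$, so $|S^*|=\tfrac{3}{2}\omega(G-S^*)$. Let the components of $G-S^*$ be $C_1,\dots,C_k$ with $k=\omega(G-S^*)$. The first step is to show that claw-freeness forces a rigid bipartite structure between $S^*$ and the $C_i$'s. Indeed, if some $s\in S^*$ had a neighbor in each of three distinct components, those neighbors would form an induced $K_{1,3}$ centered at $s$, so each $s\in S^*$ has neighbors in at most two of the $C_i$'s. Combined with $3$-connectivity, which requires each $C_i$ to have at least three neighbors in $S^*$, a double count of the pairs $\{(s,i):\,N(s)\cap V(C_i)\ne\emptyset\}$ gives
\[
3k \;\le\; \sum_{i=1}^{k}\bigl|\{s\in S^*:\,N(s)\cap V(C_i)\ne\emptyset\}\bigr| \;\le\; 2|S^*|=3k,
\]
so equality holds throughout. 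Thus every $s\in S^*$ meets exactly two components and every $C_i$ is joined to exactly three vertices of $S^*$. In particular $|V(C_i)|\ge 2$ for each $i$, since a singleton $C_i=\{u\}$ would give $\deg(u)\le 3$, contradicting $\delta(G)\ge 4$.

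Next I invoke minimality. For each edge $e=uv$ there is a witness set $S_e$ with $|S_e|<\tfrac{3}{2}\omega((G-e)-S_e)$. Coupled with $|S_e|\ge\tfrac{3}{2}\omega(G-S_e)$ and the fact that deleting a single edge increases the number of components by at most one, this forces $\omega((G-e)-S_e)=\omega(G-S_e)+1$; hence $u,v\notin S_e$ lie in the same component of $G-S_e$ but in distinct components $A\ni u$ and $B\ni v$ of $(G-e)-S_e$. Claw-freeness at $u$ then implies that $N(u)\cap V(A)$ is a clique: any independent pair inside $N(u)\cap V(A)$ together with $v$ would form an induced claw at $u$, since $v$ has no neighbor in $V(A)\setminus\{u\}$ within $G-e$. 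The symmetric statement holds at $v$.

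The hard part will be combining the rigid global structure coming from $S^*$ with the local clique constraint extracted from $S_e$. My intended route is to pick an edge $e=ux$ lying inside some $C_i$ (possible since $|V(C_i)|\ge 2$ and $C_i$ is connected), and to select $S_e$ among all witness sets so as to maximize $|S_e\cap S^*|$; then $S_e$ should behave as a controlled perturbation of $S^*$. Using $\deg(u)\ge 4$, the clique condition on $N(u)\cap V(A)$, and the fact that exactly three vertices of $S^*$ are adjacent to $C_i$, I would track the neighborhoods across the splitting of $C_i$ and argue that either the new component containing $u$ has fewer than three neighbors in its separator (violating $3$-connectivity) or some vertex of $S^*\cup S_e$ is forced to acquire neighbors in a third component (violating the tight double-count). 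The bookkeeping between $S^*$ and $S_e$, rather than any single inequality, is where I expect the genuine difficulty to lie.
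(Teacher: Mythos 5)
This statement is quoted in the paper from Ma, Hu, and Yang (2023) without proof, so there is no in-paper argument to compare against; your proposal has to stand on its own, and as written it does not. The preliminary structure you derive is sound: Chv\'atal's bound gives $3$-connectivity and hence $\delta(G)\ge 3$; the double count $3k\le\sum_i|\{s\in S^*: N(s)\cap V(C_i)\ne\emptyset\}|\le 2|S^*|=3k$ correctly forces each $s\in S^*$ to meet exactly two components and each $C_i$ to attach to exactly three vertices of $S^*$; and the clique condition on $N(u)\cap V(A)$ extracted from a witness set $S_e$ is a correct use of claw-freeness. But the actual contradiction with $\delta(G)\ge 4$ is never derived. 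Your final paragraph is an explicit statement of intent (``I would track the neighborhoods \dots and argue that either \dots or \dots''), and you yourself flag that the bookkeeping between $S^*$ and $S_e$ is where the genuine difficulty lies. That bookkeeping is the theorem; everything before it is standard setup that appears in essentially every paper on minimally tough graphs. In particular, nothing you have written rules out the possibility that when an edge $e=ux$ inside $C_i$ is deleted, the witness set $S_e$ is wildly different from $S^*$, so the ``controlled perturbation'' heuristic has no justification yet, and the two failure modes you propose (a new component with fewer than three neighbors, or a vertex seeing a third component) are not shown to be exhaustive.

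A secondary, smaller point: the inequality $|S_e|\ge\tfrac{3}{2}\omega(G-S_e)$ only follows from $3/2$-toughness when $S_e$ is a cut-set of $G$; the degenerate case $\omega(G-S_e)=1$ needs a separate (easy, but currently absent) sentence using $3$-connectivity and $\delta\ge4$ to recover the conclusion $\omega((G-e)-S_e)=\omega(G-S_e)+1$. The main verdict, though, is that the proof stops exactly where the real work begins, so the central step is missing.
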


In general, Ma, Hu, and Yang (2023) \cite{Ma-Hu-Yang-2023-arXiv-claw-free} confirmed a weaker version of it on claw-free graphs by giving the explicit linear bound $3t+ \lceil \frac{t-5}{3}\rceil$ on the minimum degrees. In Section~\ref{sec:3t}, disprove Generalized Kriesel's Conjecture for claw-free graphs by constructing a family of (non-regular) counterexamples whose minimum degrees are close to thrice their highnesses. It remains to decide how much the number $\lceil \frac{t-5}{3}\rceil $ in the following theorem can be reduced.

\begin{thm}{\rm (\cite{Ma-Hu-Yang-2023})}\label{intro:thm:general-claw-free}
{Every minimally $t$-tough claw-free graph with $t\ge 2$ has a vertex of degree at most $3t+ \lceil \frac{t-5}{3}\rceil$. 
}\end{thm}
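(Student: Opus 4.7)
The plan is to argue by contradiction: suppose $G$ is a claw-free minimally $t$-tough graph with $t\ge 2$ whose minimum degree exceeds $3t+\lceil(t-5)/3\rceil$, and derive a contradiction by analyzing the neighborhood of a vertex of minimum degree.

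\textbf{Step 1 (Edge witnesses).} For each edge $e=vu$, minimality yields a set $T_e\subseteq V(G)$ with $|T_e|<t\cdot\omega((G-e)-T_e)$, while $t$-toughness of $G$ forces $|T_e|\ge t\cdot\omega(G-T_e)$. Hence deleting $e$ strictly raises the component count of $G-T_e$, and $v,u$ lie in a common component $C_e$ of $G-T_e$ which splits into parts $X_e\ni v$ and $Y_e\ni u$ when $e$ is removed. Combining the two inequalities gives $t\,\omega(G-T_e)\le|T_e|<t(\omega(G-T_e)+1)$, so $|T_e|$ is essentially determined by $\omega(G-T_e)$.

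\textbf{Step 2 (Using claw-freeness around $v$).} Pick a minimum-degree vertex $v$ with $d=\delta(G)$ and $N(v)=\{u_1,\ldots,u_d\}$. Claw-freeness gives $\alpha(G[N(v)])\le 2$, so for any fixed $u_i\in N(v)$ the set $K_i:=N(v)\setminus N[u_i]$ is a clique of $G$ (three pairwise non-adjacent neighbors of $v$ would create an induced claw). Moreover, every common neighbor $w\in N(v)\cap N(u_i)$ must lie in $T_{vu_i}$: otherwise $w$ would be adjacent to both $v\in X_{vu_i}$ and $u_i\in Y_{vu_i}$ inside $(G-vu_i)-T_{vu_i}$, contradicting that $X_{vu_i}$ and $Y_{vu_i}$ are in different components. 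This yields the crucial lower bound $|T_{vu_i}|\ \ge\ |N(v)\cap N(u_i)|\ =\ d-1-|K_i|$.

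\textbf{Step 3 (Closing the bound).} Choose $u_i$ so that $|K_i|$ is minimal (equivalently, so that $|N(v)\cap N(u_i)|$ is maximal). Combine the lower bound above with $|T_{vu_i}|<t(\omega(G-T_{vu_i})+1)$ to upper-bound $d$ in terms of $\omega(G-T_{vu_i})$ and $|K_i|$. Then extract an independent transversal of $G-T_{vu_i}$ and observe that its vertices interact with the clique $K_i\subseteq N(v)$ and with $X_{vu_i},Y_{vu_i}$ in a controlled way: each additional component of $G-T_{vu_i}$ permits at most $t$ extra vertices in $T_{vu_i}$, while $K_i$ and the ``boundary'' between $X_{vu_i}$ and $Y_{vu_i}$ each contribute at most $t$ to $d$. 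Balancing these three contributions — the part of $N(v)$ sitting in $T_{vu_i}$, the clique $K_i$, and the vertices stranded outside $C_{vu_i}$ — gives $d\le 3t+\lceil(t-5)/3\rceil$.

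\textbf{Main obstacle.} The delicate part is Step~3: a naive inequality easily yields $d\le 4t$ or $d\le 3t+O(1)$, but extracting the precise overhead $\lceil(t-5)/3\rceil$ requires a finer case split on whether $\omega(G-T_{vu_i})=2$ or strictly larger, on whether $K_i$ is entirely contained in $X_{vu_i}$ or crosses the cut, and on how the secondary components of $G-T_{vu_i}$ share vertices across different witnesses $T_{vu_j}$. The factor $1/3$ reflects an amortization across three regions of $G-T_{vu_i}$ ($X_{vu_i}$, $Y_{vu_i}$, and the rest), and getting it sharp — together with the shift by $5$ coming from small cases like $\omega(G-T_{vu_i})\in\{2,3\}$ — is where most of the effort must go.
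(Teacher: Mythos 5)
This theorem is stated in the paper only as a quotation of Ma, Hu, and Yang (arXiv:2311.08634); the paper contains no proof of it, so your proposal has to stand entirely on its own. Steps 1 and 2 are fine and standard: the existence of a witness set $T_e$ with $t\,\omega(G-T_e)\le |T_e|<t\,\omega((G-e)-T_e)$ and the splitting of one component into $X_e$ and $Y_e$ is the usual minimality lemma, and claw-freeness does give both that $K_i=N(v)\setminus N[u_i]$ is a clique and that $N(v)\cap N(u_i)\subseteq T_{vu_i}$, hence $|T_{vu_i}|\ge d-1-|K_i|$.

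The genuine gap is Step 3, which is a description of what a proof would have to accomplish rather than an argument. From Steps 1--2 one gets only $d\le 1+|K_i|+|T_{vu_i}|<1+|K_i|+t(\omega(G-T_{vu_i})+1)$, and neither $|K_i|$ nor $\omega(G-T_{vu_i})$ is bounded at this point: $K_i$ is a clique in $N(v)$ of size $d-1-|N(v)\cap N(u_i)|$, and nothing said so far caps it by anything like $t$; likewise $\omega(G-T_{vu_i})$ can be large, and when it is, the slack $|T_{vu_i}|<t(\omega+1)$ does not translate into a bound on the part of $T_{vu_i}$ meeting $N(v)$. The assertions that ``$K_i$ and the boundary each contribute at most $t$ to $d$'' and that the three regions can be amortized with a factor $1/3$ are exactly the content of the theorem, and you explicitly defer them (``getting it sharp \dots is where most of the effort must go''). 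As written, the argument proves at best a weaker linear bound and does not reach $3t+\lceil(t-5)/3\rceil$; the case analysis on $\omega(G-T_{vu_i})$, on how $K_i$ meets $X_{vu_i}\cup Y_{vu_i}$, and the interaction between different witnesses $T_{vu_j}$ would all need to be carried out in full before this could be called a proof.
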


Motivated by these new graph constructions, one may ask whether every minimally $t$-tough graph has minimum degree at most
 $\lceil c t \rceil$ for a constant number $c$. We conjecture that the answer is false and put forward a conjecture in Section~\ref{sec:revised-conj} for this purpose. 
%
%
%
%
%
%
%
%
\section{Small graphs and planar graphs}
\label{sec:small-graphs}
By a computer search, we observed that among all connected graphs of order at most $10$, there is a unique counterexample of Conjecture~\ref{intro:conj:Generalized} which was already found in \cite[Theorem~9]{Zheng-Sun-2024} (see the left graph is Figure~\ref{small:graphs}) and there are only three counterexamples of order $11$ (the middle-right graph was also found in 
\cite[Theorem~1.6]{Cheng-Li-Liu-2024}). One of these graphs is not regular (the right graph) and consequently Conjecture~\ref{intro:conj:renewed} fails. (We will construct an infinite family of counterexamples in Theorems~\ref{thm:claw-free} and~
\ref{thm:3t-1}). Note that there is another counterexample which is shown in the right graph of Figure~\ref{Fig:solid-graphs}. 

\begin{figure}[h]
 \centering
 \includegraphics[scale = 1.4]{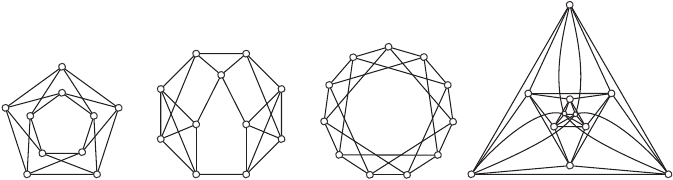}
 \caption{All minimally $t(G)$-tough graphs $G$ of order at most $11$ satisfying $\delta(G) > \lceil 2t(G)\rceil $, where 
 $t(G)\in \{4/3, 3/2, 6/4, 5/2\}$, respectively (from left to right).} 
\label{small:graphs}
\end{figure}
In the following theorem, we are going to introduce planar $4$-regular counterexamples using a simple structure based on the copy of the complete graph $K_5$ minus an edge. We originally found the smallest graph by applying a computer search on the restricted outputs of the program {\it plantri} due to Brinkmann and McKay (2007) \cite{Brinkmann-McKay-2007}.

\begin{thm}
{There are infinitely many planar minimally $3/2$-tough $4$-regular graphs.
}\end{thm}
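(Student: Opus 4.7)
The plan is to describe, for every sufficiently large $n$, an explicit graph $G_n$ consisting of $n$ copies of $K_5-e$ glued together in a cyclic pattern (as depicted in the accompanying figure and discovered for small $n$ by computer search on plantri), where the two degree-$3$ vertices of each copy are joined to the neighbouring copies by a short linking structure chosen to preserve both planarity and $4$-regularity. The first routine step is to verify these two properties: a planar drawing is obtained by embedding the $n$ gadgets around a common outer face with the linking edges routed along the boundary without crossings, and the degree count is immediate since the three triangle vertices of $K_5-e$ already have degree $4$ internally, while the two degree-$3$ vertices collect exactly one additional edge each from the bridging structure.

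Next I would establish $t(G_n)=3/2$ in two steps. For the upper bound, removing the three triangle vertices of one chosen gadget (possibly together with a small number of boundary vertices) isolates the two degree-$3$ vertices of that gadget, giving a cut $S$ with $|S|=3$ and $\omega(G_n-S)=2$, hence $t(G_n)\le 3/2$. For the lower bound, the plan is a gadget-local amortised counting argument: for an arbitrary cut $S$, write $S_i=S\cap V_i$ where $V_i$ is the vertex set of the $i$-th copy of $K_5-e$, and show, using the $3$-connectedness of $K_5-e$, that the components of $G_n-S$ contributed by gadget $i$ are at most $\tfrac{2}{3}|S_i|$ in a suitable amortised sense, with cross-gadget components charged fractionally to the adjacent gadgets. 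Summing over $i$ yields $\omega(G_n-S)\le \tfrac{2}{3}|S|$ and hence $t(G_n)\ge 3/2$.

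Finally, to prove minimality, I would use the cyclic symmetry of $G_n$ to reduce to a small number of edge orbits (triangle edges of a gadget, edges from a degree-$3$ vertex to a triangle vertex, and bridging edges between consecutive gadgets), and for each orbit exhibit an explicit cut in $G_n-e$ whose ratio is strictly below $3/2$. For example, removing a triangle edge inside a gadget lets the previously tight $3$-vertex cut isolate a third additional vertex, dropping the ratio to $1$; deletion of a bridging edge similarly enables an isolated pendant, again strictly lowering the ratio.

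The main obstacle I expect is the lower bound $t(G_n)\ge 3/2$ in the middle step: a separating set $S$ can straddle gadget boundaries in many ways, so the naive per-gadget inequality breaks at the interfaces. Overcoming this will require a careful amortised accounting in which each bridging edge contributes a bounded amount of cross-gadget connectivity, together with an exhaustive but finite case analysis of how $S$ can intersect a single $K_5-e$ copy; once that local inequality is in place, the global bound is immediate by summation over the $n$ gadgets.
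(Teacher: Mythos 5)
Your overall strategy --- a cyclic ring of small gadgets built around $K_5$ minus an edge, an amortised per-gadget charging argument for the lower bound on toughness, and a symmetry-reduced orbit-by-orbit check for minimality --- is exactly the shape of the paper's argument. But there is a concrete flaw in the construction as you have specified it. You take each gadget to be a literal copy of $K_5-e$ (five vertices) and state that ``the two degree-$3$ vertices collect exactly one additional edge each from the bridging structure.'' That gives each gadget exactly two edges leaving it. Then the two degree-$3$ vertices of any single gadget form a cut set $S$ with $|S|=2$ that separates that gadget's remaining triangle from the rest of the graph, so $\omega(G_n\setminus S)\ge 2$ and $t(G_n)\le 1$, not $3/2$. (Relatedly, your proposed tight cut for the upper bound --- the three triangle vertices of one gadget --- does not disconnect anything if the two bridging edges go to different neighbours, since each degree-$3$ vertex then still reaches the rest of the ring.) The paper avoids this by using a six-vertex gadget (a $5$-cycle plus a hub adjacent to four of its vertices) with \emph{three} edges to each neighbouring gadget, i.e.\ six attachment edges per gadget; this is what forces any separating set to pay at least $3$ vertices per $2$ components locally and is essential for reaching toughness $3/2$. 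Any $4$-regular realisation of your two-edges-per-gadget scheme is doomed for the same reason, so the fix is not cosmetic.

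Beyond that, the part you yourself identify as the main obstacle --- the lower bound $t(G_n)\ge 3/2$ --- is only announced, not carried out. In the paper this is the bulk of the proof: one defines $\theta_i=|S_i|-\tfrac32\omega_i$ per gadget and must handle ``bad'' gadgets with $\theta_i<0$ by a case analysis that propagates constraints across up to \emph{four} consecutive gadgets before the deficit is recovered; the naive local inequality genuinely fails at interfaces, as you suspected, and no single-gadget argument from $3$-connectedness of $K_5-e$ suffices. The minimality check is likewise only sketched, and at least one of your sample cuts (deleting a triangle edge ``drops the ratio to $1$'') cannot be right, since a minimally $3/2$-tough graph has $t(G-e)$ only slightly below $3/2$ for the cuts one actually exhibits. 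As it stands the proposal is a plausible research plan built on an infeasible gadget, not a proof.
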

\begin{proof}
{Let $m$ be an even number with $m\ge 4$.
For each $i$ with $1\le i\le m$, let $v_{i, 1}, \ldots, v_{i, 6}$ be six numbers.
First, we add five edges $v_{i, 1}v_{i, 2},\ldots, v_{i, 4}v_{i, 5}, v_{i, 5}v_{i, 1}$ (which forms a cycle).
Next, we add four edges $v_{i, 6}v_{i, j}$, where $j\in \{1,\ldots, 4\}$.
Finally, if $i$ is even, we add three edges $v_{i, 1}v_{i+1, 1},v_{i, 2}v_{i+1, 5}, v_{i, 5}v_{i+1, 2}$,
and if $i$ is odd, we add three edges $v_{i, 4}v_{i+1, 4},v_{i, 3}v_{i+1, 5}, v_{i, 5}v_{i+1, 3}$ 
(if $i=m$, we compute $i+1$ modulo $m$ that means $i+1=1$). 
Call the resulting graph $G$. Obviously, this graph is planar and $4$-regular.
 For the smallest case $m$=4, the graph $G$ is illustrated in Figure~\ref{fig:planar-graph}.

\begin{figure}[h]
 \centering
 \includegraphics[scale = 2.5]{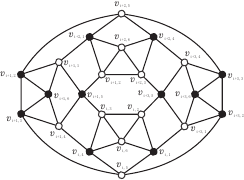}
 \caption{A minimally $3/2$-tough planar $4$-regular graph of order $24$.} 
\label{fig:planar-graph}
\end{figure}

We claim that $t(G)= 3/2$.
We first show that $t(G)\ge 3/2$.
Let $S$ be an arbitrary cut-set of $G$.
Define $G_i$ to be the induce subgraph of $G$ with the vertex set $V_i=\{v_{i, j}:1\le j\le 6\}$ and let $S_i=S\cap V_i$.
Since $G_i$ has independence number at most two, we must have $ \omega(G_i\setminus S_i)\le 2$. 
Let $\omega_i$ be the number of components of $G_i\setminus S_i$ 
which is not connected to a component of $G_{i+1}\setminus S_{i+1}$ in $G\setminus S$. 
Define $\theta_i=|S_i|-\frac{3}{2}\omega_i$. Let start with the following simple but useful subclaims.

{\bf Subclaim 1}: If $G_i\setminus S_i$ is disconnected, then $|S_i|\ge 3$ or $S_{i}=\{v_{i,1}, v_{i,4}\}$

{\bf Subclaim 2}: If $\{v_{i,1}, v_{i,5} \} \subseteq S_i$ or $\{v_{i,4}, v_{i,5} \} \subseteq S_i$, then $\theta_i \ge 1/2$.
In particular, in both cases, we have $\theta_i \ge 1$, if $|S_i|\ge 3$ or there is one component of $G_i\setminus S_i$ connecting to a component of 
$G_{i+1}\setminus S_{i+1}$.

If there is a component of $G$ having one vertex from every subgraph $G_i$, then 
$\omega(G\setminus S)\le \sum_{1\le i\le m} (\omega(G_i\setminus S_i)-1) +1$.
Moreover, $\omega_i\le 1$ and $\theta_i\ge 0$ for all indices $i$.
Since $S$ is a cut-set, there is a subgraph $G_i$ in which $G_i\setminus S_i$ is disconnected.
If $|S_i|\ge 3$, then $\theta_i \ge 3/2$. Otherwise, by Subclaim 1, $S_i=\{v_{i,1}, v_{i,4}\}$.
If there is a subgraph $G_j$ in which $G_j\setminus S$ is connected and $|S_j|>0$, then $\theta_i + \theta_j \ge 1/2 + 1=3/2$.
Otherwise, there must be at least two other subgraphs $G_{j_1}$ and $G_{j_2}$ in which
 $G_{j_t}\setminus S_{j_t}$ is connected. Thus $\theta_i + \theta_{j_1}+\theta_{j_2} \ge 1/2 + 1/2+1/2=3/2$.
Therefore, $\sum_{1\le i\le m}\theta_i \ge 3/2$ which implies that $|S|\ge \frac{3}{2} \omega(G\setminus S)$.
Now, assume that there is no component of $G\setminus S$ having one vertex from every subgraph $G_i$.
Therefore, 
$$\sum_{1\le i\le m}\theta_i=\sum_{1\le i\le m}(|S_i|-\frac{3}{2}\omega_i)=
|S|-\frac{3}{2}\omega(G\setminus S).$$
We say an index $i$ is {\it good} if $\theta_i\ge 0$, and {\it bad} otherwise.
If all indices are good, then $\frac{|S|}{\omega(G\setminus S)}\ge 3/2$.
Otherwise, by applying the following assertion, one can conclude that there is a partition $P$ of indices $\{1,\ldots, m\}$ such that for every $I\in P$, $\sum_{i\in I}\theta_i\ge 0$ and so $\sum_{1\le i\le m}\theta_i\ge 0$ and $\frac{|S|}{\omega(G\setminus S)}\ge 3/2$.

{\bf Subclaim 3}: If $b$ is a bad index, then there exists an index $j\in \{1,2,3\}$ such that $\sum_{b\le i\le b+j}\theta_i\ge 0$ and all indices $b+1,\ldots, b+j$ are good.

{\bf Proof of Subclaim 3}. By the isomorphic property, we may assume that $b$ is odd.
Suppose the assertion false. This implies that $\theta_b +\max\{0, \theta_{b+1},\theta_{b+1}+\theta_{b+2}, \theta_{b+1}+\theta_{b+2}+\theta_{b+3}\} \le -1/2$. In addition, any component of $G_b\setminus S_b$ is not connected to a component of $G_{b+1}\setminus S_{b+1}$.
Obviously, $\omega_b\in \{1,2\}$. To derive a contradiction, we shall consider the following two cases:

{\bf Case A}: $\omega_b=1$.

In this case, $|S_b|\le 1$ and $-3/2 \le \theta_{b}\le -1/2$.
If $ S_{b} \cap \{v_{b, 3}, v_{b, 4}, v_{b, 5} \} =\emptyset$,
 then $\{v_{b+1, 3}, v_{b+1, 4}, v_{b+1, 5} \}\subseteq S_{b+1}$ and $\theta_{b+1}\ge 3/2$.
Since $\theta_b+\theta_{b+1}\ge 0$, we derive a contradiction. Assume that $ |S_{b}\cap \{v_{b, 3}, v_{b, 4}, v_{b, 5}\}| = 1$.
In this case, $\theta_{b}= -1/2$. 
If $S_{b}=\{v_{b, 5}\}$, then since the unique component of $G_b\setminus S_b$ is not connected to a component of $G_{b+1}\setminus S_{b+1}$, we must have $\{v_{b+1, 4}, v_{b+1, 5}\}\subseteq S_{b+1}$ and $\theta_{b+1}\ge 1/2$ using Subclaim 2.
Since $\theta_b+\theta_{b+1}\ge 0$, we derive a contradiction.
If $ S_{b}=\{v_{b, 4}\}$, then it is not difficult to check that 
$S_{b+1}=\{v_{b+1, 3}, v_{b+1, 5}, v_{b+1, 6}\} $ and $\theta_{b+1}= 0$. 
Similarly, since any component of $G_{b+1}\setminus S_{b+1}$ is not connected to a component of $G_{b+2}\setminus S_{b+2}$,
we must have $\{v_{b+2, 1}, v_{b+2, 5} \}\subseteq S_{b+2}$ and $\theta_{b+2}\ge 1/2$ using Subclaim 2.
Since $\theta_b+\theta_{b+1}+\theta_{b+2}\ge 0$, we derive a contradiction.
If $ S_{b}=\{v_{b, 3}\}$, then it is not difficult to check that 
$ S_{b+1}=\{v_{b+1, 1}, v_{b+1, 3}, v_{b+1, 4}\}$ and $\theta_{b+1}= 0$. 
Since any component of $G_{b+1}\setminus S_{b+1}$ is not connected to a component of $G_{b+2}\setminus S_{b+2}$, we must have $S_{b+2}=\{v_{b+2, 2}, v_{b+2, 5}, v_{b+2,6} \} $ and $\theta_{b+2}= 0$.
Therefore, we must have $S_{b+3}=\{v_{b+3, 4}, v_{b+3, 5}\} $ and $\theta_{b+2}\ge 1/2$ using Subclaim 2.
Since $\theta_b+\theta_{b+1}+\theta_{b+2}+\theta_{b+3}\ge 0$, we again derive a contradiction.

{\bf Case B}: $\omega_b=2$.

In this case, by Subclaim 1, we must have $S_b=\{v_{b, 1}, v_{b, 4}\}$ and so $\theta_b= -1$. 
Since there is not a component of $G_b\setminus S_b$ connecting to a component of $G_{b+1}\setminus S_{b+1}$, 
it is not difficult to check that $\{v_{b+1, 3}, v_{b+1, 5}\}\subseteq S_{b+1}$, $\{v_{b+1, 1}, v_{b+1, 2}\}\cap S_{b+1}=\emptyset$, and $\theta_{b+1}\ge 0$.
Consequently, $S_{b+2}=\{v_{b+2, 1}, v_{b+2, 2} \}$ and $\theta_{b+2}= 1/2$ using Subclaim 2.
This implies that $S_{b+3}=\{v_{b+3, 4}, v_{b+3, 5} \}$ and $\theta_{b+3}= 1/2$ using Subclaim 2 again.
Since $\theta_b+\theta_{b+1}+\theta_{b+2}+\theta_{b+3}\ge 0$, we derive contradiction, as desired.

Therefore, $t(G) \ge 3/2$.
From now on, for notational simplicity, for each odd index $i$, we set $S_i = \{v_{i, 1}, v_{i, 4}\} $, and for even index $i$, 
we set $S_i = V_i\setminus \{v_{i, 1}, v_{i, 4}\}$; recall that $V_i=\{v_{i, j}: 1\le j\le 6\}$.
If we set $S=\cup_{1\le i\le m} S_i$, then it is easy to check that $|S|=3m$ and $\omega(G\setminus S) = 2m$. 
This implies that $t(G)\le \frac{|S|}{\omega(G\setminus S)}=3/2$. Hence $t(G) = 3/2$ and the claim is proved.
It remains to show that $G$ is minimally $t(G)$-tough. 
Let $e$ be an arbitrary edge of $G$. We shall consider the following cases:

{\bf Case 1}: Both ends of $e$ lie in the set $\{v_{i,2}, v_{i,3}, v_{i,6}\}$.

By the isomorphic property, we may assume that $i$ is odd.
Let $v$ be the unique vertex in $\{v_{i,2}, v_{i,3}, v_{i,6}\}$ not incident with $e$.
If we set $S=(\cup_{1\le j\le m}S_j )\cup \{v\}$, then it is easy to check that $|S|=3m+1$ and $\omega((G-e)\setminus S) = 2m+1$. 
This implies that $t(G-e)\le \frac{|S|}{\omega(G\setminus S)}=\frac{3m+1}{2m+1}<3/2$.

{\bf Case 2}: $e\in \{v_{i,1}v_{i,6}, v_{i,2}v_{i,1}, v_{i,2}v_{i+1,5}\}$.

By the isomorphic property, we may assume that $i$ is even.
If $e=v_{i,1}v_{i,6}$, then we set $S=(\cup_{1\le j\le m}S_j)\setminus \{v_{i,6}\}$.
Otherwise, we set $S=(\cup_{1\le j\le m}S_j)\setminus \{v_{i,2}\}$.
In both cases, it is easy to check that $|S|=3m-1$ and $\omega((G-e)\setminus S) = 2m$. 
This implies that $t(G-e)\le \frac{|S|}{\omega(G\setminus S)}=\frac{3m-1}{2m}<3/2$.

{\bf Case 3}: $e=v_{i,4}v_{i+1,4}$.

By the isomorphic property, we may assume that $i$ is odd.
If we set 
$S=((\cup_{1\le j\le m}S_j ) \setminus \{v_{i-1,2}, v_{i,4}, v_{i+1, 3}\} ) \cup
 \{v_{i, 5}\}$, then 
it is not difficult to check that $|S|=3m-2$ and $\omega((G-e)\setminus S) = 2m-1$. 
This implies that $t(G-e)\le \frac{|S|}{\omega(G\setminus S)}=\frac{3m-2}{2m-1}<3/2$.

{\bf Case 4}: $e=v_{i,1}v_{i,5}$.

By the isomorphic property, we may assume that $i$ is even.
If we set $S=((\cup_{1\le j\le m}S_j) \setminus \{v_{i-2, 2}, v_{i-1,4}, v_{i,3}, v_{i, 5}\} )\cup 
\{v_{i-1,3}, v_{i-1,6}, v_{i-1,5}, v_{i,4}, v_{i+1,2}\}
 ,$ then 
it is not difficult to check that $|S|=3m+1$ and $\omega((G-e)\setminus S) = 2m+1$. 
This implies that $t(G-e)\le \frac{|S|}{\omega(G\setminus S)}=\frac{3m+1}{2m+1}<3/2$.

The remaining edges are similar with respect to isomorphic property of the graph $G$.
Hence the proof is completed.
}\end{proof}

In 1999 Broersma, Engsberg, and Trommel \cite{Broersma-Engsberg-Trommel-1999} studied minimally tough square graphs. 
In particular, they showed that if the square of a graph is minimally $2$-tough, then the original graph must be minimally $2$-connected and triangle-free. We have recently used a small square graph in \cite{List-coloring} to provide a solution for several conjectures and open problems related to list coloring. Surprisingly, we observed that this graph is another counterexample to Conjecture~\ref{intro:conj:Generalized}. (Note that there is an infinite family of such examples, but this is enough for our purpose). 

\begin{thm}
{There is a minimally $3$-tough $7$-regular graph $G$ of order $12$.
In particular, $G$ is the square of the planar line graph $L(S(K_4))$.
}\end{thm}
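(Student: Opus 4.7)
The plan is to label the $12$ vertices of $L(S(K_4))$ by ordered pairs $(i,j)$ with $i\ne j$ in $\{1,2,3,4\}$, where $(i,j)$ encodes the edge of $S(K_4)$ joining the $K_4$-vertex $i$ to the subdivision vertex on the edge $ij$. In $L(S(K_4))$ two such vertices are adjacent iff they share their first coordinate (common $K_4$-end) or are coordinate swaps of each other (common subdivision vertex); enumerating the three length-two walks out of $(i,j)$ then shows that in $G=L(S(K_4))^2$ one has $(i,j)\sim(a,b)$ if and only if $a\in\{i,j\}$ or $b=i$. The three cases $a=i$, $a=j$, and $a\notin\{i,j\}$ with $b=i$ contribute $2+3+2=7$ neighbours, so $G$ is $7$-regular; planarity of $L(S(K_4))$ is its standard drawing as the truncated tetrahedron.

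Next I would determine $\alpha(G)$. The four non-neighbours of $(i,j)$ are $(k,j),(\ell,j),(k,\ell),(\ell,k)$, where $\{k,\ell\}=\{1,2,3,4\}\setminus\{i,j\}$; applying the adjacency rule to this quadruple shows that the only non-edge among them is $\{(k,j),(\ell,j)\}$. Hence $\alpha(G)=3$ and the only independent $3$-sets are the four sets $I_j=\{(k,j):k\ne j\}$, $j\in\{1,2,3,4\}$. Taking $S=V(G)\setminus I_j$ gives $|S|=9$ and $\omega(G-S)=3$, so $t(G)\le 3$. For the reverse inequality, $G$ is vertex-transitive under the $S_4$-action $(i,j)\mapsto(\sigma(i),\sigma(j))$, and the Watkins--Mader bound for connected vertex-transitive graphs then gives $\kappa(G)\ge\lceil\frac{2}{3}(7+1)\rceil=6$. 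For any cut-set $S$, choosing one vertex from each component of $G-S$ produces an independent set of size $\omega(G-S)$, forcing $\omega(G-S)\le 3$; and if $\omega(G-S)=3$ and some component contained two distinct vertices $v,v'$, then together with one representative from each of the other two components we could form two different $3$-independent sets sharing the same two points, which is impossible because the $I_j$ are pairwise disjoint. Hence every component is a singleton, $V(G-S)=I_j$ for some $j$, and $|S|=9$. Combined with $|S|\ge 6$ when $\omega(G-S)=2$, we obtain $|S|/\omega(G-S)\ge 3$ in every case, so $t(G)=3$.

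For minimality, the edges of $G$ split into three $S_4$-orbits: triangle edges $\{(i,j),(i,k)\}$, swap edges $\{(i,j),(j,i)\}$, and chain edges $\{(i,j),(j,k)\}$ with $i,j,k$ distinct. For a representative $e$ in each orbit I would exhibit a $4$-vertex set $T\ni e$ whose only edges in $G$ are $e$ together with exactly one other edge; then in $G-e$ the set $T$ induces a single edge, so $S=V(G)\setminus T$ has $|S|=8$ and $\omega((G-e)-S)=3$, giving $t(G-e)\le 8/3<3$. The adjacency rule verifies the following choices: $T=\{(1,2),(1,3),(3,2),(4,2)\}$ with auxiliary edge $\{(1,3),(3,2)\}$ for the triangle orbit; $T=\{(1,2),(2,1),(3,4),(4,3)\}$ with auxiliary edge $\{(3,4),(4,3)\}$ for the swap orbit; and $T=\{(1,2),(2,3),(1,3),(4,3)\}$ with auxiliary edge $\{(1,2),(1,3)\}$ for the chain orbit.

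The substantive part of the argument is the lower bound $t(G)\ge 3$: it requires both the classification of the maximum independent sets and the $6$-connectivity of $G$. The latter is the main technical obstacle, but the vertex-transitivity of $G$ under $S_4$ together with the Watkins--Mader bound sidesteps a hands-on Menger argument. Everything else reduces to short verifications with the adjacency rule $a\in\{i,j\}$ or $b=i$.
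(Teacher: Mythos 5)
Your proof is correct and follows essentially the same strategy as the paper's: bound $t(G)$ above by deleting the complement of a maximum independent set, bound it below by combining a connectivity estimate (for cut-sets with $\omega(G\setminus S)=2$) with the classification of the maximum independent sets (for $\omega(G\setminus S)=3$, forced to consist of singletons), and certify minimality by exhibiting for each edge $e$ an $8$-vertex cut of $G-e$ leaving three components, giving $t(G-e)\le 8/3$. The differences are refinements rather than a new route, but they are worth noting: where the paper simply asserts that $G$ is $7$-connected, you derive $\kappa(G)\ge\lceil\tfrac{2}{3}(7+1)\rceil=6$ from vertex-transitivity under the $S_4$-action via the Watkins--Mader bound, and $6$ is exactly enough for the $\omega=2$ case; and you organize the minimality check by the three $S_4$-orbits of edges (triangle, swap, chain) with a uniform ``four vertices, two induced edges'' witness, rather than the paper's two cases $e\in E(H)$ versus $e\notin E(H)$. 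Your explicit ordered-pair coordinatization with the adjacency rule ``$a\in\{i,j\}$ or $b=i$'' makes the key structural fact fully checkable, namely $\alpha(G)=3$ with exactly the four pairwise disjoint maximum independent sets $I_j$ (the paper's phrase ``independent sets of size $4$'' is a typo, as its own computation $|S|/|I|=9/3$ requires $|I|=3$). I verified the adjacency computations in your three witness sets and in the non-neighbourhood of $(i,j)$; they all check out.
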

\begin{proof}
{Let $H$ be the cubic graph illustrated in Figure~\ref{Fig:12-special-square-graph} and 
let $G$ be the square of $H$. 
It is not difficult to show that $G$ contains only four maximum independent sets of size $4$
 which are shown in Figure~\ref{Fig:12-special-square-graph} by numbers $1,\ldots, 4$.

\begin{figure}[h]
 \centering
 \includegraphics[scale = 1.75]{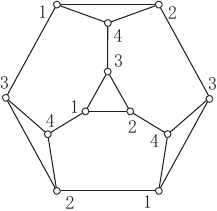}
 \caption{The square of the planar graph $H$ is a minimally $3$-tough $7$-regular graph.} 
\label{Fig:12-special-square-graph}
\end{figure}

Let $I$ be a maximum independent set of $G$. If we set $S=V(G)\setminus I$, then 
$t(G)\le \frac{|S|}{\omega(G\setminus S)}= \frac{|S|}{|I|}=\frac{9}{3}=3$.
Let $S$ be a cut-set of $V(G)$. 
Since $G$ is $7$-connected, we must have $|S|\ge 7$.
If $\omega(G\setminus S)=2$, then 
$ \frac{|S|}{\omega(G\setminus S)}\ge \frac{7}{2}> 3$.
If $\omega(G\setminus S)\ge 3$, then 
according to the property of $G$, the subgraph $G\setminus S$ must consist of three isolated vertices and so 
$S=|V(G)|-3=9$ which implies that $ \frac{|S|}{\omega(G\setminus S)}= 3$.
Consequently, $t(G)=3$.

We are going to show that $G$ is minimally $t(G)$-tough. Let $e=uv$ be an arbitrary edge of $G$.
If $e\in E(H)$, then it is not difficult to check that there is a unique edge $e'=u'v'$ of $H$ which is not connected to $e$ by an edge in $G$. If we set $S=V(G)\setminus \{u, v, u', v'\}$, then 
$t(G-e) \le \frac{|S|}{\omega((G-e)\setminus S)}=\frac{8}{3}<3=t(G)$.
If $e\not \in E(H)$, then we may assume that $u$ and $v$ have a common neighbor $w$ in $H$ for which $w$ and $v$ lie in a triangle in $H$. In this case, we again have $t(G-e) \le \frac{|S|}{\omega((G-e)\setminus S)}=\frac{8}{3}<3=t(G)$, where 
$S=V(G)\setminus (I\cup \{u\})$ and $I$ is the maximum independent set of $G$ including $v$.
Hence the proof is completed.
}\end{proof}
%
%
%
%
%
%
%
%
%
\section{Minimally tough claw-free graphs with minimum degree close to thrice their toughness}
\label{sec:3t}
In the following theorem, we show that the upper bound in Theorem~\ref{intro:thm:general-claw-free} cannot be replaced by $3t-2$ (by setting $2t=m=\lceil \frac{2n+1}{3}\rceil=\frac{2n+1}{3}$ provided that $n\stackrel{3}{\equiv}1$). Note that all graphs $G$ presented below are not regular, and alternatively disprove Conjecture~\ref{intro:conj:renewed}. In addition, $G$ is the line graph of the graph obtained from two copies of $n$-stars by identifying $m$ pairs of pendant vertices belonging to different stars.
\begin{thm}\label{thm:claw-free}
{Let $n$ and $m$ be two positive integers satisfying $\frac{2}{3}n<m < n$ and $n\ge 7$. If $G$ is the graph obtained from the Cartesian product $K_n\square P_2$ by deleting $m$ independent edges connecting two copies of $K_n$, then $G$ is a minimally $\frac{m}{2}$-tough claw-free graph satisfying $$\delta(G)=\Delta(G)-1=n-1.$$
}\end{thm}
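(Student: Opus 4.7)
The plan is to realize $G$ explicitly as two cliques $A=\{a_1,\dots,a_n\}$ and $B=\{b_1,\dots,b_n\}$ joined by a matching $\{a_ib_i:i\in I\}$ with $|I|=m$ and complement $J:=\{1,\dots,n\}\setminus I$ of size $n-m$. The degree and claw-free properties are then immediate: each $a_k$ has exactly the $n-1$ clique-neighbors in $A$ together with the single neighbor $b_k$ if and only if $k\in I$, so $\deg(a_k)\in\{n-1,n\}$, yielding $\delta(G)=n-1$ and $\Delta(G)=n$ (both values occur since $1\le m\le n-1$), and any three neighbors of $a_k$ must contain two vertices of the clique $A\setminus\{a_k\}$, killing any induced claw.

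For $t(G)=m/2$ I will exploit that both $A$ and $B$ induce cliques, which forces $\alpha(G)\le 2$ and hence $\omega(G-S)\le 2$ for every cut-set $S$. In the only nontrivial case $\omega(G-S)=2$, the two components must lie one inside $A\setminus S$ and the other inside $B\setminus S$ (a clique cannot be split across components), so the absence of edges between them forces at least one endpoint of each matching edge $a_ib_i$, $i\in I$, into $S$; independence of the matching then gives $|S|\ge m$. Conversely, $S^\star:=\{b_i:i\in I\}$ has size $m$ and leaves the two cliques $A$ and $\{b_j:j\in J\}$ as the components of $G-S^\star$, so $t(G)=m/2$.

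For minimality I will verify, for each edge $e\in E(G)$, that a suitable perturbation of $S^\star$ witnesses $t(G-e)<m/2$. If $e=a_ka_\ell$ lies inside $A$ (and symmetrically inside $B$), I take $S':=(A\setminus\{a_k,a_\ell\})\cup\{b_i:i\in\{k,\ell\}\cap I\}$: in $(G-e)-S'$ each of $a_k,a_\ell$ is isolated (its only possible $B$-neighbor is also removed), while $B\setminus S'$ remains a clique, so $\omega\ge 3$ and $|S'|/\omega\le n/3$. If $e=a_kb_k$ is a matching edge, I take $S':=\{b_i:i\in I\setminus\{k\}\}$ of size $m-1$; this leaves the two cliques $A$ and $\{b_k\}\cup\{b_j:j\in J\}$ as components with no surviving matching edge between them, giving ratio $(m-1)/2$.

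The only delicate piece of the argument is the arithmetic in the $A$-internal case: the worst subcase is $\{k,\ell\}\subseteq I$, where $|S'|=n$ and the ratio equals $n/3$, and this is strictly below $m/2$ \emph{exactly} under the hypothesis $m>\tfrac{2}{3}n$, so that hypothesis is sharp for the construction. The remaining hypotheses $n\ge 7$ and $m<n$ enter only to guarantee that the third component $B\setminus\{b_k,b_\ell\}$ is nonempty and that both $I$ and $J$ are nonempty for the degree claim, so I expect no further obstruction.
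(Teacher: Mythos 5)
Your proposal is correct and follows essentially the same route as the paper: the same explicit two-clique-plus-matching model, the bound $\omega(G\setminus S)\le 2$ from $\alpha(G)\le 2$ forcing $|S|\ge m$ for any cut-set, the witness set of the $m$ matched vertices on one side for the upper bound, and the same two families of cut-sets (of sizes $m-1$ and at most $n$) certifying minimality, with the hypothesis $m>\tfrac{2}{3}n$ entering exactly where you say it does. The only cosmetic difference is that you derive $|S|\ge m$ directly from the independence of the matching edges, whereas the paper invokes $m$-connectivity of $G$; both are fine.
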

\begin{proof}
{Let $G_i$ be a copy of the complete graph $K_n$ with vertices $v_{i,1},\ldots, v_{i,n}$, where $i=1,2$.
For each $j$ with $1\le j\le m$, we
add an edge $v_{1, j}v_{2, j}$ and call the resulting graph $G$. The special case $n=7$ and $m=4$ is illustrated in Figure~\ref{KnP2} (the right graph).

\begin{figure}[h]
 \centering
 \includegraphics[scale = 2.2]{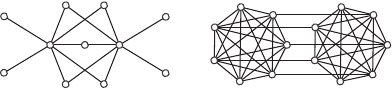}
 \caption{The line graph of the left graph is a minimally $5/2$-tough graph $G$ satisfying $\delta(G)=6$ (which is shown in the right).} 
\label{KnP2}
\end{figure}

If we set $S=\{v_{1, p}: 1\le p\le m\}$, 
then $\omega(G\setminus S)=2$ and $|S|=m$ which implies that $t(G)\le \frac{|S|}{\omega(G\setminus S)} =\frac{m}{2}$.
On the other hand, if $S$ is a cut-set of $G$, then $\omega(G\setminus S)=2$ and $|S|\ge m$,
because $\alpha(G)=2$ and $G$ is $m$-connected.
This implies that $ \frac{|S|}{\omega(G\setminus S)}\ge \frac{m}{2}$ and so $t(G)= \frac{m}{2}$.
We are going to show that $G$ is minimally $t(G)$-tough. Let $e$ be an arbitrary edge of $G$.
If $e=v_{1, j}v_{2,j}$, then $t(G-e) \le \frac{|S|}{\omega((G-e)\setminus S)}=\frac{m-1}{2}<\frac{m}{2}=t(G)$, 
where $S=\{v_{1, p}: 1\le p\le m \text { and } p\neq j\}$.
If $e=v_{i, j_1}v_{i,j_2}$, then $t(G-e) \le \frac{|S|}{\omega((G-e)\setminus S)}=\frac{n}{3}<\frac{m}{2}=t(G)$, where 
$S=(V(G_{i})\setminus \{v_{i, j_1},v_{i,j_2}\})\cup \{v_{3-i, j_1},v_{3-i,j_2}\}$.
This completes the proof.
}\end{proof}
In the following theorem, we construct two new families of non-regular and regular minimally $t$-tough graphs with minimum degree at least $\lceil 3t\rceil-1$. 
\begin{thm}\label{thm:3t-1}
{If $n$ is an integer with $n\ge 3$, then the Cartesian product $G=K_n\square P_3$ is a non-regular minimally $\frac{n+1}{3}$-tough graph satisfying $$\delta(G)=\Delta(G)-1=n.$$
 In addition, if $G_0$ is the graph obtained from $G$ be deleting the middle vertex of an induced path $v_1v_2v_3$ and replacing the new edge $v_1v_3$, then $G_0$ is $n$-regular and minimally $\frac{n+1}{3}$-tough.
}\end{thm}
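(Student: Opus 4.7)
The plan is to fix coordinates $V(G) = \{v_{i,j} : 1 \le i \le 3,\ 1 \le j \le n\}$ so that each $V(G_i) = \{v_{i,j}\}_j$ spans a $K_n$ and the inter-copy edges are precisely the pairs $v_{i,j}v_{i+1,j}$. Counting neighbours gives $\deg v_{1,j} = \deg v_{3,j} = n$ and $\deg v_{2,j} = n+1$, which establishes $\delta(G) = n = \Delta(G)-1$ and non-regularity. For $t(G) \le (n+1)/3$ I would display the cut $S = \{v_{2,1},\ldots,v_{2,n-1}\} \cup \{v_{1,n}, v_{3,n}\}$, whose removal leaves exactly three components: the cliques $V(G_1) \setminus \{v_{1,n}\}$ and $V(G_3) \setminus \{v_{3,n}\}$ and the isolated vertex $v_{2,n}$. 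For the reverse inequality I would first note that $\alpha(G) = 3$, so $\omega(G \setminus S) \le 3$; moreover, since each $V(G_i)$ is a clique, the surviving indices within copy $i$ form a single block $B_i \subseteq \{1,\ldots,n\}$, and blocks $B_i, B_{i+1}$ merge in $G \setminus S$ exactly when $B_i \cap B_{i+1} \ne \emptyset$ (copies $1$ and $3$ are never directly joined). When $\omega = 3$, the requirements $B_1, B_2, B_3 \ne \emptyset$ and $B_1 \cap B_2 = B_2 \cap B_3 = \emptyset$ force $B_2 \subseteq \{1,\ldots,n\} \setminus (B_1 \cup B_3)$ and hence $|B_1|+|B_2|+|B_3| \le n + |B_1 \cap B_3|$; since $B_2 \ne \emptyset$ bounds $|B_1 \cap B_3|$ by $n-1$, this sum is at most $2n-1$, giving $|S| \ge n+1$. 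When $\omega = 2$ a short case check on the possible block mergings always yields $|S| \ge n$, and $n/2 \ge (n+1)/3$ holds for $n \ge 2$.

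For minimality I would exhibit, for every edge $e$, a cut of $G - e$ with ratio strictly below $(n+1)/3$. For an inter-copy edge $e = v_{1,j}v_{2,j}$ (and symmetrically for copies $2,3$) the set $S = \{v_{2,p} : p \ne j\} \cup \{v_{3,j}\}$ of size $n$ isolates $v_{2,j}$ in $G - e$ and separates the two outer cliques, giving $\omega = 3$ and ratio $n/3$. For an intra-copy-$1$ edge $e = v_{1,j_1}v_{1,j_2}$ (and symmetrically for copy $3$) the set $S = (V(G_1) \setminus \{v_{1,j_1}, v_{1,j_2}\}) \cup \{v_{2,j_1}, v_{2,j_2}\}$ of size $n$ isolates both $v_{1,j_1}$ and $v_{1,j_2}$ (they are no longer joined inside copy $1$) while copies $2$ and $3$ merge, again giving ratio $n/3$. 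The delicate case is $e = v_{2,j_1}v_{2,j_2}$ inside copy $2$: any $3$-component cut here reproduces the extremal optimisation of the lower bound and therefore cannot beat $(n+1)/3$. The key observation is that $v_{2,j_1}$ and $v_{2,j_2}$ are mutually non-adjacent in $G - e$, so the cut $S = (V(G_2) \setminus \{v_{2,j_1}, v_{2,j_2}\}) \cup \{v_{1,j_1}, v_{1,j_2}, v_{3,j_1}, v_{3,j_2}\}$ of size $n+2$ simultaneously isolates both of these vertices and leaves two further cliques $V(G_1) \setminus \{v_{1,j_1}, v_{1,j_2}\}$ and $V(G_3) \setminus \{v_{3,j_1}, v_{3,j_2}\}$ as separate components, producing $\omega = 4$ and ratio $(n+2)/4 < (n+1)/3$ for $n \ge 3$.

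For the $G_0$ part the degree check is immediate: deleting $v_{2,1}$ lowers the degrees of $v_{1,1}, v_{3,1}, v_{2,2},\ldots,v_{2,n}$ by exactly one, and adding $v_{1,1}v_{3,1}$ restores $v_{1,1}$ and $v_{3,1}$ to degree $n$, so $G_0$ is $n$-regular. The toughness argument then runs in parallel, with two bookkeeping modifications: $B_2 \subseteq \{2,\ldots,n\}$ since $v_{2,1}$ is deleted, and copies $1$ and $3$ additionally merge whenever $1 \in B_1 \cap B_3$ through the added edge. Rerunning the same optimisation gives $|S| \ge n+1$ for $\omega = 3$, realised by $S = \{v_{2,p} : 2 \le p \le n-1\} \cup \{v_{1,n}, v_{3,n}, v_{1,1}\}$, and $|S| \ge n$ for $\omega = 2$ as before. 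The minimality cuts are built from the same three templates; the only adjustment is that whenever the deleted edge might leave $v_{1,1}v_{3,1}$ free to re-merge copies $1$ and $3$---notably when $e$ is incident to $v_{1,1}$ or $v_{3,1}$, or when $e$ is the added edge itself---one extra vertex among $\{v_{1,1}, v_{3,1}\}$ must be added to the cut. The main obstacle in the whole proof is thus the intra-copy-$2$ edge case, which forces the passage from a $3$- to a $4$-component cut; a secondary difficulty is tracking the added edge $v_{1,1}v_{3,1}$ across every sub-case of the $G_0$ minimality analysis to ensure that the outer copies remain disconnected.
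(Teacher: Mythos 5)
Your proposal is correct and takes essentially the same route as the paper: the same extremal cut $\{v_{2,p}\}\cup\{v_{1,n},v_{3,n}\}$ for the upper bound, the same reduction via $\alpha(G)=3$ and the clique structure of the three copies for the lower bound, and the identical witness cuts for the three edge types (including the $(n+2)/4$-ratio cut of size $n+2$ with four components for spine edges). Your handling of $G_0$ — one extra vertex from $\{v_{1,1},v_{3,1}\}$ added to each template whenever the new edge would re-merge the outer copies — is exactly the paper's adjustment.
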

\begin{proof}
{Let $G_i$ be a copy of the complete graph $K_n$ with vertices $v_{i,1},\ldots, v_{i,n}$, where $i=1,2,3$.
For each $p$ with $1\le p\le n$, we
insert two edges $v_{1, p}v_{2, p}$ and $v_{2, p}v_{3, p}$ into these graphs and call the resulting graph $G$.
This graph is isomorphic to the Cartesian product $ K_n\square P_3$.
The special case $n=5$ is illustrated in Figure~\ref{Fig:KnP3} (the left graph). 
\begin{figure}[h]
 \centering
 \includegraphics[scale = 1]{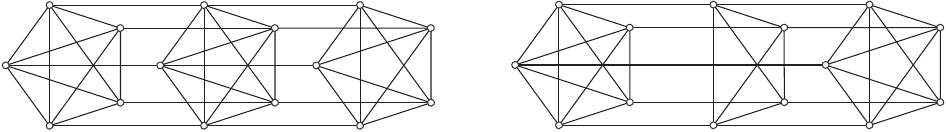}
 \caption{Two minimally $2$-tough graphs with minimum degree $5$ (regular and non-regular).} 
\label{Fig:KnP3}
\end{figure}
We claim that $t(G)= \frac{n+1}{3}$. If we set $S=\{v_{2,p}: 1\le p< n\} \cup \{v_{1,n}, v_{3, n}\}$, 
then $|S|=n+1$ and $\omega(G\setminus S)=3$. 
Hence $t(G)\le \frac{|S|}{\omega(G\setminus S)}=\frac{n+1}{3}$. 
It remains to show that $t(G)\ge \frac{n+1}{3}$.
Let $S$ be an arbitrary cut-set of $V(G)$.
 If $|S|\ge n+1$, then
 $\frac{|S|}{\omega(G\setminus S)}\ge \frac{|S|}{\alpha(G\setminus S)}\ge \frac{n+1}{3}$.
Assume that $|S|\le n$. According to the construction of $G$, we must have $\omega(G\setminus S)=2$.
More precisely, $|S\cap \{v_{1,p}, v_{2,p}, v_{3,p}\} |= 1$ for all integers $p$ with $1\le p\le n$.
Therefore, $\frac{|S|}{\omega(G\setminus S)}= \frac{n}{2}\ge \frac{n+1}{3}$. 
Hence the claim is proved.
We are going to show that $G$ is minimally $t(G)$-tough. 
Let $e$ be an arbitrary edge of $G$.
If $e=v_{2, j}v_{i,j}$, then $t(G-e) \le \frac{|S|}{\omega((G-e)\setminus S)}=\frac{n}{3}<\frac{n+1}{3}=t(G)$, 
where $S=\{v_{2,p}:1\le p\le n \text{ and } p\neq j\}\cup \{v_{4-i, j}\}$.
If $e=v_{i, j_1}v_{i,j_2}$ and $i\neq 2$, then $t(G-e) \le \frac{|S|}{\omega((G-e)\setminus S)}=\frac{n}{3}<\frac{n+1}{3}=t(G)$, 
where $S=\{v_{i,p}:1\le p\le n \text{ and } p\neq j_1, j_2\}\cup \{v_{2, j_1},v_{2,j_2}\}$.
If $e=v_{2, j_1}v_{2,j_2}$, then $t(G-e) \le \frac{|S|}{\omega((G-e)\setminus S)}=\frac{n+2}{4}<\frac{n+1}{3}=t(G)$, where $S=\{v_{2,p}:1\le p\le n \text{ and } p\neq j_1, j_2\}\cup \{v_{1, j_1},v_{1,j_2}, v_{3, j_1},v_{3,j_2}\}$.
This completes the proof of the first part.

Let $G_0$ be the graph obtained from $G$ by deleting the vertex $v_{2,n}$ and adding the new edge $v_{1,n}v_{3,n}$.
Obviously, $G_0$ is $n$-regular. The special case $n=5$ is illustrated in Figure~\ref{Fig:KnP3} (the right graph). 
For notional simplicity, let us use $H$ instead of $G_0$.
We claim that $t(H)= \frac{n+1}{3}$. 
If we set $S=\{v_{2, p}: 1\le p\le n-2\} \cup \{v_{1,n-1}, v_{3, n-1}, v_{3,n}\}$, 
then $|S|=n+1$ and $\omega(H\setminus S)=3$. Hence $t(H)\le \frac{|S|}{\omega(H\setminus S)}=\frac{n+1}{3}$. 
It remains to show that $t(H)\ge \frac{n+1}{3}$.
Let $S$ be an arbitrary cut-set of $V(H)$. 
 If $|S|\ge n+1$, then
 $\frac{|S|}{\omega(H\setminus S)}\ge \frac{|S|}{\alpha(H\setminus S)}\ge \frac{n+1}{3}$.
Assume that $|S|\le n$.
According to the construction of $H$, we must have $\omega(H\setminus S)=2$. More precisely,
 $|S\cap \{v_{1,n}, v_{3,n}\}| = 1$ and $|S\cap \{v_{1,p}, v_{2,p}, v_{3,p}\} |= 1$ 
for all integers $p$ with $1\le p\le n-1$.
Therefore, $\frac{|S|}{\omega(H\setminus S)}= \frac{n}{2}\ge \frac{n+1}{3}$. 
Hence the claim is proved.
We are going to show that $H$ is minimally $t(H)$-tough. 
Let $e$ be an arbitrary edge of $H$.
If $e=v_{1,n}v_{3,n}$, then $t(H-e) \le \frac{|S|}{\omega((H-e)\setminus S)}=\frac{n}{3}<\frac{n+1}{3}=t(H)$, 
where $S=\{v_{2,p}:1\le p \le n-2\}\cup \{v_{1, n-1}, v_{3, n-1}\}$.
If $e=v_{2, j}v_{i,j}$, then $t(H-e) \le \frac{|S|}{\omega((H-e)\setminus S)}=\frac{n}{3}<\frac{n+1}{3}=t(H)$, 
where $S=\{v_{2,p}:1\le p\le n-1 \text{ and } p\neq j\}\cup \{v_{4-i, j}, v_{1, n}\}$.
If $e=v_{i, j_1}v_{i,j_2}$ and $i\neq 2$, then $t(H-e) \le \frac{|S|}{\omega((H-e)\setminus S)}=\frac{n}{3}<\frac{n+1}{3}=t(H)$, 
where $S=\{v_{i,p}:1\le p\le n \text{ and } p\neq j_1, j_2\}\cup \{v_{2, j_1}, v\}$ in which $v=v_{2,j_2}$ when $n\not \in \{j_1, j_2\}$, and $v=v_{4-i,n}$ when $j_1 \neq j_2=n$.
If $e=v_{2, j_1}v_{2,j_2}$, then $t(H-e) \le \frac{|S|}{\omega((H-e)\setminus S)}=\frac{n+2}{4}<\frac{n+1}{3}=t(H)$, where $S=\{v_{2,p}:1\le p\le n-1 \text{ and } p\neq j_1, j_2\}\cup \{v_{1, j_1},v_{1,j_2}, v_{3, j_1},v_{3,j_2}, v_{1, n}\}$.
Hence the proof is completed.
}\end{proof}
%
%
%
%
%
%
%
%
%
%
%
\section{A revised conjecture}
\label{sec:revised-conj}

Motivated by Theorem~\ref{thm:3t-1}, one may ask whether every minimally $t$-tough graph has minimum degree at most
 $\lceil c t \rceil$ for a constant number $c$. We conjecture that the answer is false (even possibly in $2$-solid graphs)
and put forward the following conjecture. To support this conjecture, by applying a computer search on vertex-transitive graphs on up to $35$ vertices \cite{Royle-Holt}, we could discover a minimally $4$-tough $24$-regular (resp. $6$-tough $36$-regular) $2$-solid graph $G$ of order $70$ satisfying $\frac{\delta(G)}{t(G)}=6$.
\begin{conj}\label{conj:revised}
{For every positive real number $c$, there is a minimally $t(G)$-tough (possibly 2-solid) graph $G$ satisfying $\frac{\delta(G)}{t(G)}\ge c$.
}\end{conj}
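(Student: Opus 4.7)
The plan is to prove the conjecture by exhibiting, for each positive constant $c$, an explicit minimally tough graph $G$ with $\delta(G)/t(G)\ge c$. Two complementary directions look promising. The first is to generalize Theorem~\ref{thm:3t-1} by replacing the $P_3$ factor in $K_n\square P_3$ with a denser ``spine''. Natural candidates are $K_n\square C_m$ and $K_n\square P_m$ for larger $m$, or more generally the lexicographic product $H[K_n]$ where $H$ is chosen so that its toughness grows slowly relative to its minimum degree. The hope is that thickening the spine raises the minimum degree faster than the sizes of the toughness-witnessing cuts.

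The second direction exploits the $s$-solid operation, motivated by the computer-found order-$70$ examples mentioned just before the conjecture. A short calculation shows that if $H=\mathcal{S}(G_0,s)$ and $\tilde S$ is any cut-set of $H$, and if $S_s\subseteq V(G_0)$ denotes the set of vertices all of whose $s$ copies lie in $\tilde S$, then the remaining copies corresponding to a single component of $G_0\setminus S_s$ stay connected in $H\setminus\tilde S$, so $\omega(H\setminus\tilde S)=\omega(G_0\setminus S_s)$; combined with $|\tilde S|\ge s|S_s|$ this yields $t(H)=s\cdot t(G_0)$. Since also $\delta(H)=s\cdot\delta(G_0)$, the ratio $\delta/t$ is exactly preserved by solidification, so one must locate a vertex-transitive base graph $G_0$ of arbitrarily large ratio $\delta(G_0)/t(G_0)$ whose $s$-solid is minimally tough. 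Cayley graphs of abelian groups, strongly regular graphs, and incidence graphs of combinatorial designs are natural candidates, since their toughness can often be extracted from spectral or block-design parameters.

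The main obstacle will be minimality: for every edge $e$ one must exhibit a cut-set $S_e$ of $G-e$ with $|S_e|/\omega((G-e)\setminus S_e)<t(G)$. Edge-by-edge inspection becomes infeasible as $G$ grows, so the plan is to exploit a symmetry group acting on the edge set to reduce the check to a bounded number of orbit representatives, and then to prove a general preservation lemma ensuring that $\mathcal{S}(G_0,s)$ inherits minimality from $G_0$ under suitable symmetry hypotheses. The truly hard step, I expect, is constructing an infinite family of base graphs $G_0$ with unbounded $\delta(G_0)/t(G_0)$ for which every edge is ``tight'' in the appropriate sense; no existing family in the literature appears to achieve this, so a genuinely new construction --- algebraic, probabilistic, or recursive --- will likely be required.
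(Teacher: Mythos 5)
You are addressing Conjecture~\ref{conj:revised}, which the paper does not prove: it is posed as an open problem, supported only by finite computational evidence (a minimally $4$-tough $24$-regular $2$-solid graph of order $70$ with ratio $6$, and a $6$-tough $36$-regular one). So there is no proof in the paper to compare against, and your text is a research programme rather than a proof; neither of your two directions is carried out, and the one concrete lemma you do state is false. In the $s$-solid graph $H=\mathcal{S}(G_0,s)$ the $s$ copies of a single vertex are pairwise \emph{non}-adjacent, so if a component of $G_0\setminus S_s$ is a singleton $\{v\}$ and all $s$ copies of $v$ survive in $H\setminus\tilde S$, that component contributes $s$ components of $H\setminus\tilde S$, not one; moreover an optimal $\tilde S$ need not be a union of full fibres. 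Hence $\omega(H\setminus\tilde S)=\omega(G_0\setminus S_s)$ and the conclusion $t(H)=s\cdot t(G_0)$ both fail, and with them the claim that $\delta/t$ is ``exactly preserved by solidification.'' The paper's own citation of Zheng and Sun~\cite{Zheng-Sun-2024} is a counterexample: $t(C_{2k+1})=1$ and $\delta(C_{2k+1})/t(C_{2k+1})=2$, yet the $2$-solid of $C_{2k+1}$ is $4$-regular with toughness $1+\frac{1}{2k-1}\rightarrow 1$, so its ratio tends to $4$, not $2$. This error also makes your second direction circular: even granting a preservation lemma, you would still need base graphs $G_0$ with unbounded $\delta(G_0)/t(G_0)$ whose solids are minimally tough, which is essentially the conjecture itself.

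For the first direction, the obstacle is exactly the one you flag but do not resolve: minimality. You give no argument that $K_n\square P_m$ or $K_n\square C_m$ is minimally $t$-tough for $m\ge 4$, nor that its toughness-witnessing cuts scale so that $\delta/t$ grows with $m$; Theorem~\ref{thm:3t-1} only reaches ratio about $3$, and the delicate edge-by-edge cut constructions used there do not obviously extend once the spine acquires interior vertices. Reducing to orbit representatives under a symmetry group shrinks the amount of checking but does not supply the required cuts. As it stands, the proposal contains no verifiable step toward unbounded $\delta/t$, and its only quantitative claim is contradicted by results quoted in the paper.
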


The structure of those graphs were not easy to draw, but to see a more clear graph example with smaller ratio, one can consider the $2$-solid graph $G$ obtained from the circulant graph $\mathcal{C}(39,\{3,4\})$ for which $\frac{\delta(G)}{t(G)}=5.2$ (more precisely, $t(G)=40/26$ and $\delta(G)=8$). Note that this ratio for the left graph in Figure~\ref{Fig:solid-graphs} is $4.2$. In addition, the main result in \cite{Zheng-Sun-2024} says that this ratio for $2$-solid graphs obtained from odd cycles must tend to $4$. Fortunately, to compute efficiently the toughness of the $s$-solid graph $G$ obtained from a graph $H$, we only needed to consider cut-sets $S$ with the minimum $\frac{|S|}{\omega(G\setminus S)}$ and $|S|$ so that for every vertex $v\in V(H)$, $S$ either includes all copies of $v$ in $G$ or excludes all of them (together with using isomorphic properties of $G$).

\begin{figure}[h]
 \centering
 \includegraphics[scale = 0.78]{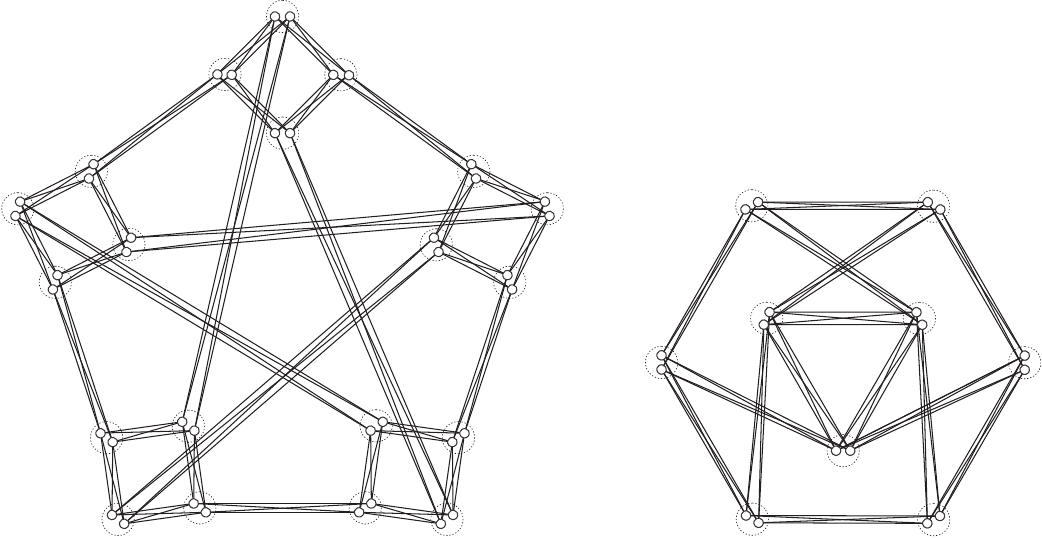}
 \caption{Two minimally $t(G)$-tough $2$-solid (regular and non-regular) graphs $G$ with minimum degree $6$ satisfying $t(G)\in \{\frac{20}{14},2\}$ and $\frac{\delta(G)}{t(G)}\in \{4.2, 3\}$.} 
\label{Fig:solid-graphs}
\end{figure}

Finally, we pose the following question for further investigation. Note that
Kriesel's Conjecture says that $f(1)$ is finite and it must be $2$ while Conjecture~\ref{conj:revised} says that,
regardless of $f(t)$ is finite or not, $\frac{f(t)}{t}$ must tend to infinity when $t$ tends to infinity.
\begin{prob}
{Let $t$ be a positive real number. 
Is it true that there exists a positive integer $f(t)$ such that every minimally $t$-tough graph $G$ satisfies $\delta(G)\le f(t)$?
}\end{prob}
%
%
%
%
%
%
%
%
%


\begin{thebibliography}{10}

\bibitem{Brinkmann-McKay-2007}
G. Brinkmann and B.D McKay, Fast generation of planar graphs. MATCH Commun. Math. Comput. Chem. 58 (2007) 323--357.


\bibitem{Broersma-Engsberg-Trommel-1999}
H.J. Broersma, E. Engsberg, and H. Trommel, Various results on the toughness of graphs, Networks 33 (1999) 233--238.


\bibitem{Cheng-Li-Liu-2024}
K. Cheng, C. Li, and F. Liu, Constructions of minimally $t$-tough regular graphs, (2024), arXiv:2412.12659.

\bibitem{Chvatal-1973}
V. Chv\' atal, Tough graphs and Hamiltonian circuits, Discrete Math. 5 (1973) 215--228.

\bibitem{List-coloring}
M. Hasanvand, The List Square Coloring Conjecture fails for bipartite planar graphs and their line graphs, 2211.00622v3.


\bibitem{Kaiser-web}
 T. Kaiser, Problems from the workshop on dominating cycles,
 http://iti.zcu.cz /history/2003/Hajek/problems /hajek-problems.ps.


\bibitem{Katona-Soltesz-Varga-2018}
G.Y. Katona, D. Soltész, and K. Varga, Properties of minimally $t$-tough graphs, Discrete Math. 341 (2018) 221--231.

\bibitem{Katona-Varga-2018}
G.Y. Katona and K. Varga, Minimally toughness in special graph classes, arXiv:1802.00055, 2018.

\bibitem{Katona-Khan-2024}
G.Y. Katona and H. Khan, Minimally tough chordal graphs with toughness at most $1/2$, Discrete Math. 347 (2024) 113491.

\bibitem{Mader-1971}
W. Mader, Eine Eigenschaft der Atome endlicher Graphen, Arch. Math. 22 (1971) 333--336.

\bibitem{Ma-Hu-Yang-2024}
H . Ma, X. Hu, and W. Yang, The structure of minimally $t$-tough, $2K_2$-free graphs, Discrete Appl. Math. 346 (2024) 1--9.

\bibitem{Ma-Hu-Yang-2023}
H. Ma, X. Hu, and W. Yang, On the minimum degree of minimally $1$-tough, triangle-free 
graphs and minimally $3/2$-tough, claw-free graphs. Discrete Math. 346 (2023) 113352.


\bibitem{Ma-Hu-Yang-2023-arXiv-claw-free}
H. Ma, X. Hu, and W. Yang, On the minimum degree of minimally $t$-tough, claw-free graphs, arXiv:2311.08634.


\bibitem{Royle-Holt}
G. Royle and D Holt. Vertex-transitive graphs on fewer than $48$ vertices [Data set]. In Journal of Symbolic Computation 
101 (2020) 51--60. Zenodo.


\bibitem{Zheng-Sun-2024}
W. Zheng and L. Sun, Disproof of a conjecture on minimally t-tough graphs, Discrete Math. 347 (2024) 113982.

\end{thebibliography}
\end{document}